\DeclareMathAlphabet{\mathpzc}{OT1}{pzc}{m}{it}
\newtheorem{te}{Theorem}[section]
\newtheorem{defin}[te]{Definition}
\newtheorem{os}[te]{Remark}
\newtheorem{prop}[te]{Proposition}
\newtheorem{lem}[te]{Lemma}
\numberwithin{equation}{section}
\newcommand {\puntomio} {\mathbin{\vcenter{\hbox{\scalebox{.45}{$\bullet$}}}}}
\def \l { \left( }
\def \r {\right) }
\def \ll { \left\lbrace }
\def \rr { \right\rbrace }
\begin{document}

	\title [] {Convolution-type derivatives, hitting-times of subordinators and time-changed $C_0$-semigroups}
	\author{Bruno Toaldo} 
	\address{Department of Statistical Sciences, Sapienza University of Rome}
	\email {bruno.toaldo@uniroma1.it}
	\keywords{Subordinator, inverse process, hitting-time, fractional calculus, time-changed semigroup, continuous time random walk, L\'evy measure}
	\date{\today}
	\subjclass[2000]{60G51, 60J35, 34K30}

		\begin{abstract}
This paper takes under consideration subordinators and their inverse processes (hitting-times). The governing equations of such processes are presented by means of convolution-type integro-differential operators similar to the fractional derivatives. Furthermore the concept of time-changed $C_0$-semigroup is discussed in case the time-change is performed by means of the hitting-time of a subordinator. Such time-change gives rise to bounded linear operators governed by integro-differential time-operators. Because these operators are non-local the presence of long-range dependence is investigated.
		\end{abstract}
	
	\maketitle

\tableofcontents

\section{Introduction}

The study of subordinators and their hitting-times has attracted the attention of many researchers since the 1940's. In particular a great effort has been dedicated to the study of the relationships between Bochner subordination and Cauchy problems (\citet{bochner, bochner2}). See \citet{fellerb, jacob1, librobern} and the references therein for more information on Bochner subordination. A subordinator $^f\sigma(t)$, $t\geq 0$, is a stochastic process with non-decreasing paths for which $\mathbb{E}e^{-\lambda \, ^f\sigma(t)}=e^{-tf(\lambda)}$ where $f$ is a Bernstein function (see \citet{bertoinb, bertoins} for more details on subordinators). Its inverse process is defined as
\begin{equation}
^fL (t) \, = \, \inf \ll s\geq 0 : \, ^f\sigma (s) > t \rr
\end{equation}
and is the hitting-time of $^f\sigma$. When the function $f$ is $f(\lambda) = \lambda^\alpha$, $\alpha \in (0,1)$, the related subordinator is called the $\alpha$-\emph{stable subordinator} and the inverse process $L^\alpha (t) = \inf \ll s>0 : \sigma^\alpha (s) > t \rr$ is called the \emph{inverse stable subordinator} (see \citet{meer12, stracca, samotaq} for more information on the stable subordinator and its inverse process). The relationships between such processes and partial differential equations have been object of intense study in the past three decades and have gained considerable popularity together with the study of fractional calculus (for fractional calculus the reader can consult \citet{kill}). As pointed out in \citet{orsptrf, orsann}, fractional PDEs are indeed related to time-changed processes while the relationships between time-fractional Cauchy problems and the inverse of the stable subordinator was explored for the first time by \citet{baem, meer09, saiche, zasla}. Equations of fractional order appear in a lot of physical phenomena (\citet{meer12}) and in particular for modeling anomalous diffusions (see for example \citet{benson, mirkospa}).

In the present paper we deal with the inverse processes $^fL(t)$, $t\geq 0$, of subordinators $^f\sigma (t)$, $t\geq 0$, with Laplace exponent the Bernstein function $f$ having the following representation
\begin{equation}
f(x) \, = \, a + bx + \int_0^\infty \l 1-e^{-sx} \r \, \bar{\nu}(ds)
\end{equation}
for a non-negative measure $\bar{\nu}$ on $(0, \infty)$ (\citet{artbern, librobern}). We consider the case in which the tail $s \to \nu(s)= a + \bar{\nu}(s, \infty)$ is absolutely continuous on $(0, \infty)$ and we define integro-differential operators similar to the fractional derivatives. In particular we show how the operator
\begin{equation}
^f\mathfrak{D}_t u(t) \, = \, b \frac{d}{dt} u(t) + \int_0^t \frac{\partial}{\partial t}  u(t-s) \, \nu(s) \, ds
\label{palla}
\end{equation}
allows us to write the governing equations of
\begin{align}
\mathcal{T}_tu \, = \, \int_0^\infty T_su \, l_t(ds), \qquad u \in \mathfrak{B},
\label{intro12}
\end{align}
where $l_t(B)=\Pr \ll \, ^fL(t) \in B \rr$ is the distribution of $^fL$ and $T_s$ is a $C_0$-semigroup on the Banach space $\l \mathfrak{B}, \left\| \cdot \right\|_\mathfrak{B} \r$. We call the operator $\mathcal{T}_t$ a time-changed $C_0$-semigroup.
In fact the main result of the present paper shows that $\mathcal{T}_tu$, $u \in \mathfrak{B}$, is a bounded strongly continuous linear operator on $\mathfrak{B}$ and solves the problem
\begin{equation}
\begin{cases}
^f\mathfrak{D}_t q(t) \, = \, A q(t), \qquad 0<t<\infty, \\
q(0) = u \in \textrm{Dom}\l A \r,
\end{cases}
\label{problemaintro}
\end{equation}
where $A$ is the infinitesimal generator of the $C_0$-semigroup $T_tu$, $u \in \mathfrak{B}$.

A central role in our analysis is played by the tail $\nu(s)$ of the L\'evy measure $\bar{\nu}$ since it emerges through all the results of the paper. It appears in the definitions of convolution-type derivatives of the form \eqref{palla} we will discuss in Section \ref{sezionederivate}. Furthermore we prove the following convergence in distribution
\begin{align}
\lim_{\gamma \to 0} \l bt + \sum_{j=1}^{N \l t \, \nu(\gamma) \r} Y_j \r \, \stackrel{\textrm{law}}{=} \, ^f\sigma(t), \qquad t\geq 0,
\end{align}
where $Y_j$ are i.i.d. random variables with distribution
\begin{align}
\Pr \ll Y_j \in dy \rr \, = \, \frac{1}{\nu(\gamma)} \l \bar{\nu}(dy) + a\delta_\infty \r \mathds{1}_{y> \gamma}, \qquad \gamma >0, \, \forall j = 1, \dots, n,
\end{align}
and $N(t)$, $t\geq 0$, is a homogeneous Poisson process independent from the r.v.'s $Y_j$. The symbol $\delta_\infty$ stands for the Dirac point mass at infinity.

\subsection*{List of symbols}
Here is a list of the most important notations adopted in the paper.
\begin{enumerate}
\item[$\bullet$] With $\mathcal{L} \left[ u(\puntomio) \right] (\lambda) \, = \, \widetilde{u}(\lambda)$ we denote the Laplace transform of the function $u$.
\item[$\bullet$] $\mathcal{F}\left[u(\puntomio) \right] (\xi) = \widehat{u}(\xi)$ indicates the Fourier transform of the function $u$.
\item[$\bullet$] With $^f\sigma (t)$, $t\geq 0$, we denote the subordinator with Laplace exponent $f$.
\item[$\bullet$] $\mu_t (B) = \Pr \ll \, ^f\sigma(t) \in B \rr$ indicates the convolution semigroup (transition probabilities) associated with the subordinator $^f\sigma(t)$, $t\geq 0$. When the measure $\mu_t$ has a density we adopt the abuse of notation $\mu_t(ds) = \mu_t(s)ds$ where $\mu_t(s)$ indicates the density of $\mu_t$.
\item[$\bullet$] $^fL(t)$, $t\geq 0$, indicates the inverse of the subordinator $^f\sigma(t)$, $t\geq 0$.
\item[$\bullet$] The symbol $l_t(B) = \Pr \ll \,  ^fL(t) \in B \rr$ indicates the distribution of $^fL(t)$, $t\geq 0$. With abuse of notation we denote by $l_t(s)$ the density of $l_t(ds)$.
\item[$\bullet$] With $A$ we denote the infinitesimal generator of the semigroup $T_tu$ for $u \in \mathcal{B}$ ($\mathcal{B}$ is a Banach space).
\end{enumerate}

\section{Convolution-type derivatives}
\label{sezionederivate}
In this section we define convolution-type operators similar to the fractional derivatives. The logic of our definitions starts from the observation of the fractional derivative of order $\alpha \in (0,1)$ (in the Riemann-Liouville sense) can be considered the first-order derivative of the Laplace convolution $u(t) * t^{-\alpha}/\Gamma(1-\alpha)$ (see \citet{kill})
\begin{equation}
\frac{d^\alpha}{d t^\alpha} u(t) \, = \, \frac{1}{\Gamma(1-\alpha)} \frac{d}{dt} \int_0^t \frac{u(s)}{(t-s)^\alpha} \, ds.
\label{lapeppa}
\end{equation}
For fractional calculus and applications the reader can also consult \citet{mainalibro} and for different form of fractional derivatives \citet{achar, kochu, lorenzo, meertri, meer12}.
Formula \eqref{lapeppa} can be formally viewed as $ \l \frac{d}{dt} \r^\alpha $ for $\alpha \in (0,1)$. Here we generalize this idea to a Bernstein function (\citet{artbern}). A Bernstein function is a function $f(x):(0, \infty) \to \mathbb{R}$ of class $C^\infty$, $f(x) \geq 0, \, \forall x >0$ for which
\begin{align}
(-1)^k f^{(k)} (x) \leq 0, \qquad \forall x > 0 \textrm{ and } k \in \mathbb{N}.
\end{align}
A function $f$ is a Bernstein function if, and only if, admits the representation
\begin{equation}
f(x) \, = \, a + bx + \int_0^\infty \l 1-e^{-s x} \r \, \bar{\nu} (ds), \qquad x >0,
\label{bernsteingenerica}
\end{equation}
where $a,b \geq 0$ and $\bar{\nu}(ds)$ is a non-negative measure on $(0, \infty)$ satisfying the integrability condition
\begin{equation}
\int_0^\infty \l z \wedge 1 \r \, \bar{\nu}(dz) < \infty.
\label{integrability}
\end{equation}
According to the literature we refer to the measure $\bar{\nu}$ and to the triplet $(a, b, \bar{\nu})$ as the \textit{L\'evy measure} and the \textit{L\'evy triplet} of the Bernstein function $f$. The representation \eqref{bernsteingenerica} is called the \textit{L\'evy-Khintchine} representation of $f$.

The Bernstein functions are closely related to the so-called completely monotone functions (see more on Bernstein function in \citet{jacob1, librobern}). The function $g(x) : (0, \infty) \to \mathbb{R}$ is completely monotone if has derivatives of all orders satisfying
\begin{align}
(-1)^k g^{(k)} (x) \geq 0, \qquad \forall x > 0 \textrm{ and } k \in \ll 0 \rr \cup \mathbb{N}.
\end{align}
By Bernstein Theorem (see \cite{artbern}) the function $g$ is completely monotone if and only if
\begin{equation}
g(x) \, = \, \int_0^\infty e^{-sx} m(ds), \qquad x > 0,
\end{equation}
when the above integral converges $\forall x > 0$ and where $m(ds)$ is a non-negative measure on $[0, \infty)$. Here and all throughout the paper the following symbology and definitions will be the same. We use $f(\puntomio)$ to denote the Bernstein function with representation \eqref{bernsteingenerica} and we consider the completely monotone function
\begin{equation}
g(x) \, = \, \frac{f(x)}{x}, \qquad x > 0,
\label{gi}
\end{equation}
with representation
\begin{align}
g(x) \, = \, b + \int_0^\infty e^{-sx} \, \nu(s)ds ,
\label{gimisura}
\end{align}
where $\nu (s)$ is the tail of the L\'evy measure appearing in \eqref{bernsteingenerica} 
\begin{align}
\nu(s)ds \, = \, \l a +  \, \bar{\nu}\l s, \infty  \r \r \, ds.
\label{finoaqui}
\end{align}
The representations \eqref{gi} and \eqref{gimisura} define a completely monotone function and are valid for every Bernstein function $f$ (see for example \citet{librobern} Corollary 3.7 \emph{(iv)}).
We observe that $\nu(s)$ is in general a right-continuous and non-increasing function for which
\begin{equation}
\int_0^1 \l a+ \bar{\nu}(s, \infty) \r ds \, = \, \int_0^1 \nu(s) \, ds \, < \infty.
\label{opiccolo}
\end{equation}
Furthermore we note that 
\begin{equation}
\bar{\nu} (s, \infty) < \infty, \, \textrm{for all } s > 0.
\label{giustific}
\end{equation}
In order to justify \eqref{giustific} we recall the inequality
\begin{align}
\l 1-e^{-1} \r \l t \wedge 1 \r \leq 1-e^{-t}, \qquad t \geq 0,
\end{align}
which can be extended as
\begin{align}
\l 1-e^{-\epsilon} \r \l t \wedge \epsilon \r \leq \l 1-e^{-t} \r, \quad \textrm{ for all } 0< \epsilon \leq 1, \, t \geq 0.
\label{altradiseguaglianza}
\end{align}
By taking into account \eqref{altradiseguaglianza} we can rewrite for all $0 < \epsilon \leq 1$ the integrability condition \eqref{integrability} as
\begin{equation}
\int_0^\infty \l t \wedge \epsilon \r \bar{\nu}(dt) < \infty, \qquad \textrm{ for all } 0 < \epsilon \leq 1,
\label{nuovaintegrability}
\end{equation}
since
\begin{equation}
\int_0^\infty \l t \wedge \epsilon \r \bar{\nu}(dt) \leq \frac{e^{\epsilon}}{e^{\epsilon}-1} \int_0^\infty \l 1-e^{-t} \r \, \bar{\nu}(dt) \, = \, \frac{e^{\epsilon}}{e^{\epsilon}-1} f(1) \, < \, \infty
\end{equation}
and this implies \eqref{giustific}.
When the L\'evy measure has finite mass, that is 
\begin{equation}
\bar{\nu}(0, \infty) < \infty,
\end{equation}
and if $b=0$, the corresponding Bernstein function $f$ is bounded.

\subsection{Convolution-type derivatives on the positive half-axis}
In this section we define a generalization, with respect to a Bernstein function $f$, of the classical Riemann-Liouville fractional derivative and we discuss some of its fundamental properties. Here is the first definition.
\begin{defin}
\label{definizionederivata}
Let $0<c\leq d<\infty$ and $u \in AC([c, d])$ that is the space of absolutely continuous function on $[c, d]$. Let $f$ be a Bernstein function with representation \eqref{bernsteingenerica} and let $\bar{\nu}$ be the corresponding L\'evy measure with tail $\nu(s)=a + \bar{\nu}(s, \infty)$. Assume that $s \to \nu(s)$ is absolutely continuous on $(0, \infty)$. We define the generalized Riemann-Liouville derivative according to the Bernstein function $f$ as
\begin{align}
^f\mathcal{D}_t^{(c, d)} u(t) \, : = \, \frac{d}{dt} \left[ b  u(t) +   \int_0^{t-c} u(t-s) \,\nu(s)ds \right], \qquad t \in [c, d].
\label{formuladef}
\end{align}
\end{defin}
The representation \eqref{formuladef} can be extenended to define the derivative on the half-axis $\mathbb{R}^+$ as it is done for the classical Riemann-Liouville fractional derivative (see \citet{kill} page 79). Hence we write
\begin{align}
^f\mathcal{D}_t^{(0,\infty )} u(t) \, : = \, \frac{d}{dt} \left[ b  u(t) +   \int_0^{t} u(t-s) \,\nu(s)ds \right].
\label{nonsenepuopiu}
\end{align}

\begin{lem}
\label{laplacederivata}
Let $^f\mathcal{D}_t^{(c, \infty)}u(t)$, $t \geq c \geq 0$, be as in Definition \ref{definizionederivata} and let $|u(t)| \leq Me^{\lambda_0 t}$ for some $\lambda_0, M>0$. We have the following result
\begin{align}
\mathcal{L} \left[ \, ^f\mathcal{D}_t^{(c, +\infty)} u(t) \right] (\lambda) \, = \, f(\lambda) \, \widetilde{u}(\lambda) - b e^{-\lambda c} u(c), \qquad \Re\lambda > \lambda_0.
\end{align}
\end{lem}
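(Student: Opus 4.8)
The plan is to reduce the statement to two standard facts about the Laplace transform: the convolution rule, and the identity $\mathcal{L}[h'](\lambda)=\lambda\,\widetilde{h}(\lambda)-e^{-\lambda c}h(c)$, valid whenever $h$ is locally absolutely continuous on $[c,\infty)$ with $e^{-\lambda t}h(t)\to 0$ as $t\to\infty$. Throughout, $\widetilde{u}(\lambda)=\int_c^\infty e^{-\lambda t}u(t)\,dt$, which converges absolutely for $\Re\lambda>\lambda_0$ thanks to the bound $|u(t)|\le Me^{\lambda_0 t}$.

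First I would record the key elementary identity
\[
\int_0^\infty e^{-\lambda s}\,\nu(s)\,ds \;=\; g(\lambda)-b \;=\; \frac{f(\lambda)}{\lambda}-b, \qquad \Re\lambda>0,
\]
which is nothing but \eqref{gi}--\eqref{gimisura} read with the (possibly complex) Laplace variable $\lambda$: the integral converges absolutely for $\Re\lambda>0$ because $\nu$ is locally integrable at the origin by \eqref{opiccolo} and non-increasing (hence bounded) away from it, and the identity --- true for real $\lambda>0$ by \eqref{gi}--\eqref{gimisura} --- extends to $\Re\lambda>0$ by analyticity of both sides.

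Next, set $h(t):=b\,u(t)+\int_0^{t-c}u(t-s)\,\nu(s)\,ds=b\,u(t)+\int_c^{t}u(r)\,\nu(t-r)\,dr$, so that $^f\mathcal{D}_t^{(c,\infty)}u(t)=h'(t)$ and $h(c)=b\,u(c)$. From $|u(t)|\le Me^{\lambda_0 t}$ and the previous step one gets $|h(t)|\le M\,\tfrac{f(\lambda_0)}{\lambda_0}\,e^{\lambda_0 t}$, so $e^{-\lambda t}h(t)\to 0$ for $\Re\lambda>\lambda_0$; moreover the hypotheses of Definition \ref{definizionederivata} ($u\in AC$ on compacts, $\nu$ absolutely continuous and integrable at $0$) guarantee that $h$ is locally absolutely continuous, so the derivative rule above applies and gives $\mathcal{L}[\,^f\mathcal{D}_t^{(c,\infty)}u(t)\,](\lambda)=\lambda\,\widetilde{h}(\lambda)-b\,u(c)\,e^{-\lambda c}$ for $\Re\lambda>\lambda_0$.

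It then remains to compute $\widetilde h(\lambda)$. The term $b\,u(t)$ contributes $b\,\widetilde u(\lambda)$; for the convolution term, Fubini's theorem --- legitimate for $\Re\lambda>\lambda_0$ since $\int_c^\infty e^{-\Re\lambda\, r}|u(r)|\,dr<\infty$ and $\int_0^\infty e^{-\Re\lambda\, s}\nu(s)\,ds<\infty$ --- yields $\mathcal{L}\big[t\mapsto\int_c^{t}u(r)\nu(t-r)\,dr\big](\lambda)=\widetilde u(\lambda)\big(\tfrac{f(\lambda)}{\lambda}-b\big)$. Adding the two, the $b$'s cancel and $\widetilde h(\lambda)=\tfrac{f(\lambda)}{\lambda}\,\widetilde u(\lambda)$; substituting above gives $f(\lambda)\widetilde u(\lambda)-b\,u(c)e^{-\lambda c}$, as claimed. (For $c>0$ one may instead first substitute $v(\tau)=u(\tau+c)$ and reduce to the case $c=0$.) I expect the only genuine work to be the absolute-convergence bookkeeping: checking that the exponential control on $u$, local integrability of $\nu$ at $0$, and monotonicity of $\nu$ at infinity are precisely what licenses both Fubini and the derivative rule; once that is settled the remainder is algebra.
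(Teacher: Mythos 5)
Your argument is correct and follows essentially the same route as the paper's proof: apply the Laplace derivative rule (picking up only the boundary term $b e^{-\lambda c}u(c)$, since the convolution part vanishes at $t=c$), use Fubini on the convolution, and invoke \eqref{gi}--\eqref{gimisura} to identify $b+\int_0^\infty e^{-\lambda s}\nu(s)\,ds$ with $f(\lambda)/\lambda$. The only difference is that you supply the absolute-convergence and growth estimates (e.g.\ $|h(t)|\le M\,g(\lambda_0)e^{\lambda_0 t}$) that the paper leaves implicit.
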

\begin{proof}
The Laplace transform can be evaluated explicitely as follows
\begin{align}
\mathcal{L} \left[ ^f\mathcal{D}_t^{(c, +\infty)} u(t) \right] (\lambda) \, = \, & b \lambda \widetilde{u} (\lambda) -be^{-\lambda c} u(c) +  \mathcal{L} \left[ \frac{d}{dt} \int_0^{t-c} u(t-s) \, \nu(s)ds \right] (\lambda) \notag \\
= \, & b\lambda \widetilde{u} (\lambda) -be^{-\lambda c} u(c) +   \lambda \mathcal{L} \left[ \int_0^{t-c} u(t-s) \, \nu(s)ds \right] (\lambda) \notag \\
= \, & b\lambda \widetilde{u} (\lambda) -be^{-\lambda c} u(c) +  \lambda \int_0^{\infty} \int_{s+c}^\infty e^{-\lambda t} u(t-s) \, \nu(s) dt \, ds \notag \\
= \, & \lambda g(\lambda) \widetilde{u} (\lambda) - be^{-\lambda c} u(c) \notag \\
= \, & f(\lambda) \,  \widetilde{u} (\lambda) -be^{-\lambda c} u(c).
\end{align}
In the last steps we used \eqref{gi} and \eqref{gimisura}.
\end{proof}
In view of the previous Lemma we note that our definition is consistent and generalizes the Riemann-Liouville fractional derivatives of order $\alpha \in (0,1)$ in a reasonable way.
\begin{os}
\label{coincidriemann}
Let the function $f$ of Definition \ref{definizionederivata} be $f(x) \, = \, x^\alpha$, $x>0$, $\alpha \in (0,1)$, for which \eqref{bernsteingenerica} becomes
\begin{equation}
x^\alpha \, = \, \int_0^\infty \l 1-e^{-sx} \r \frac{\alpha s^{-\alpha -1}}{\Gamma (1-\alpha)} ds,
\label{bernsteinxallaalfa}
\end{equation}
that is to say $a=0$ and $b=0$ and
\begin{align}
\bar{\nu}(ds) \, = \, \frac{\alpha s^{-\alpha -1}}{\Gamma (1-\alpha)} ds
\end{align}
and therefore
\begin{align}
\nu(s)ds \, = \, ds \int_s^\infty \frac{\alpha z^{-\alpha -1}}{\Gamma (1-\alpha)} dz \, = \, \frac{s^{-\alpha}ds}{\Gamma (1-\alpha)}.
\end{align}
By performing these substitutions in Definition \ref{definizionederivata} it is easy to show that
\begin{align}
^f\mathcal{D}_t^{(0, +\infty)} u(t) \, = \, \frac{^Rd^\alpha}{d t^\alpha} u(t)
\end{align}
where
\begin{align}
\frac{^Rd^\alpha}{dt^\alpha} u(t) \, = \, \frac{1}{\Gamma (1-\alpha)} \frac{d}{dt} \int_0^t \frac{u(s)}{(t-s)^\alpha} ds
\label{definizioneriemann}
\end{align}
is the Riemann-Liouville fractional derivative.
\end{os}

By following the logic inspiring the fractional Dzerbayshan-Caputo derivative (see \cite{kill}) defined, for an absolutely continuous function $u(t)$, $t>0$, as
\begin{align}
\frac{^Cd^\alpha}{dt^\alpha} u(t) \, = \, \frac{1}{\Gamma (1-\alpha)} \int_0^t \frac{u^\prime(s)}{(t-s)^\alpha} ds,
\label{definizionecaputo}
\end{align}
we can give the following alternative definition of generalized derivative with respect to a Bernstein function.
\begin{defin}
\label{definizaltern}
Let $0<c\leq d < \infty$ and $u \in AC([c, d])$. Let $f$ and $\nu$ be as in Definition \ref{definizionederivata} and $u (t) \in AC\l [c, d] \r$. We define the generalized Dzerbayshan-Caputo derivative according to the Bernstein function $f$ as
\begin{align}
^f\mathfrak{D}_t^{(c, d)} u(t) \, : = \, b \frac{d}{d t} u(t)+   \int_0^{t-c} \frac{\partial}{\partial t} u(t-s) \, \nu (s) ds, \qquad t \in [c, d].
\label{formuladefalterna}
\end{align}
\end{defin}
As already done for the classical Dzerbayshan-Caputo derivative we can extend \eqref{formuladefalterna} to the half-axis $\mathbb{R}^+$ (see for example \cite{kill} page 97) by
\begin{equation}
^f\mathfrak{D}_t^{(0, \infty)} u(t) \, : = \, b \frac{d}{dt} u(t)+   \int_0^{t} \frac{\partial}{\partial t} u(t-s) \, \nu (s) ds.
\label{alternativaestesa}
\end{equation}
Throughout the paper we will write for the sake of simplicity $^f\mathfrak{D}_t$ instead of $^f\mathfrak{D}_t^{(0, \infty)}$.
\begin{lem}
\label{lemmaplacealtern}
Let $^f\mathfrak{D}_t$ be as in \eqref{alternativaestesa} and let $|u(t)| \leq M e^{\lambda_0 t}$, for some $\lambda_0$, $M>0$. We obtain
\begin{align}
\mathcal{L} \left[ ^f\mathfrak{D}_t u(t) \right] (\lambda) \, = \, f(\lambda) \widetilde{u}(\lambda) - \frac{f(\lambda)}{\lambda} u(0), \qquad \Re \lambda > \lambda_0.
\label{3222}
\end{align}
\end{lem}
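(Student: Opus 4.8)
The plan is to reduce the claim to Lemma~\ref{laplacederivata} (taken with $c=0$) by first exhibiting a pointwise relation between the Caputo-type operator of Definition~\ref{definizaltern} and the Riemann--Liouville-type operator \eqref{nonsenepuopiu}. I would differentiate the convolution in \eqref{nonsenepuopiu} by the Leibniz rule, hitting the factor $u(t-s)$ rather than $\nu(s)$: since $u\in AC$ and $\nu\in L^1_{\mathrm{loc}}(0,\infty)$ by \eqref{opiccolo}, dominated convergence gives
\[
\frac{d}{dt}\int_0^{t} u(t-s)\,\nu(s)\,ds \;=\; u(0)\,\nu(t) \;+\; \int_0^{t} u'(t-s)\,\nu(s)\,ds ,
\]
and comparing with \eqref{alternativaestesa} one obtains the identity
\[
{}^f\mathcal{D}_t^{(0,\infty)}u(t) \;=\; {}^f\mathfrak{D}_t u(t) \;+\; u(0)\,\nu(t), \qquad t>0 .
\]

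The second step is to Laplace transform this identity. By Lemma~\ref{laplacederivata} with $c=0$ one has $\mathcal{L}\!\left[{}^f\mathcal{D}_t^{(0,\infty)}u\right](\lambda)=f(\lambda)\widetilde{u}(\lambda)-b\,u(0)$ for $\Re\lambda>\lambda_0$, while the representation \eqref{gimisura} of $g(x)=f(x)/x$ (see \eqref{gi}) yields directly
\[
\mathcal{L}\!\left[\nu(\puntomio)\right](\lambda)\;=\;\int_0^\infty e^{-\lambda s}\nu(s)\,ds\;=\;g(\lambda)-b\;=\;\frac{f(\lambda)}{\lambda}-b,
\]
the integral converging for $\Re\lambda>0$ because $\nu$ is non-increasing (hence bounded on $[1,\infty)$) and integrable near $0$ by \eqref{opiccolo}. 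Since ${}^f\mathfrak{D}_t u$ is then Laplace transformable as a difference of two functions that are, substituting gives
\[
\mathcal{L}\!\left[{}^f\mathfrak{D}_t u(t)\right](\lambda)\;=\;f(\lambda)\widetilde{u}(\lambda)-b\,u(0)-u(0)\Big(\tfrac{f(\lambda)}{\lambda}-b\Big)\;=\;f(\lambda)\widetilde{u}(\lambda)-\tfrac{f(\lambda)}{\lambda}\,u(0),
\]
which is \eqref{3222}; the $b\,u(0)$ terms cancel, as they must.

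An equivalent and slightly faster route skips the pointwise identity: write ${}^f\mathfrak{D}_t u = b\,u' + u'*\nu$, apply $\mathcal{L}[u'](\lambda)=\lambda\widetilde{u}(\lambda)-u(0)$ together with the convolution theorem and $\mathcal{L}[\nu](\lambda)=g(\lambda)-b$, and collect the bracket $b+(g(\lambda)-b)=f(\lambda)/\lambda$. The only point requiring any care — and the mild obstacle of the proof — is the legitimacy of these Laplace manipulations: one needs $u'\in L^1_{\mathrm{loc}}$ (immediate from $u\in AC$) and the integrability at infinity guaranteeing that $\mathcal{L}[u']$ exists for $\Re\lambda>\lambda_0$ (in the same spirit as the standing hypothesis $|u(t)|\le Me^{\lambda_0 t}$ of Lemma~\ref{laplacederivata}), the possible blow-up of $\nu$ at the origin being harmless since $\nu\in L^1_{\mathrm{loc}}$. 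The first route sidesteps even the bound on $u'$, since it only invokes the already-established Lemma~\ref{laplacederivata} and the elementary transform of $\nu$; everything else is the bookkeeping displayed above.
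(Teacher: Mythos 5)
Your proposal is correct, and in fact it contains two proofs: the ``slightly faster route'' you sketch at the end (write ${}^f\mathfrak{D}_t u = b\,u' + u'*\nu$, apply $\mathcal{L}[u'](\lambda)=\lambda\widetilde{u}(\lambda)-u(0)$ and the convolution theorem with $\mathcal{L}[\nu](\lambda)=g(\lambda)-b$, and collect $b+(g(\lambda)-b)=f(\lambda)/\lambda$) is precisely the paper's own proof, which carries out the Fubini interchange $\int_0^\infty e^{-\lambda t}\int_0^t(\cdot)\,ds\,dt=\int_0^\infty\int_s^\infty(\cdot)\,dt\,ds$ explicitly and then uses \eqref{gi}--\eqref{gimisura}. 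Your primary route is genuinely different in organization: it first establishes the pointwise identity ${}^f\mathcal{D}_t^{(0,\infty)}u={}^f\mathfrak{D}_t u+u(0)\nu(t)$ and then subtracts Laplace transforms, invoking Lemma \ref{laplacederivata} with $c=0$. This is exactly the $c=0$ case of relation \eqref{tipocaputoriemann}, which the paper proves only \emph{after} this lemma (and only for $c>0$, via integration by parts against $V(s)=\int_0^s\nu$); so your route buys a cleaner reduction to already-stated results at the price of anticipating a proposition the paper has not yet established at this point. One caveat: your claim that the first route ``sidesteps even the bound on $u'$'' is optimistic --- the dominated-convergence step turning $\int_0^t h^{-1}\bigl(u(t+h-s)-u(t-s)\bigr)\nu(s)\,ds$ into $\int_0^t u'(t-s)\nu(s)\,ds$ still requires control of the difference quotients of $u$ against the (possibly unbounded near $0$, but integrable) weight $\nu$, since absolute continuity alone does not give a dominating function. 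This is a technical point at the same level of informality as the paper's own Fubini swap, so it does not invalidate the argument, but it should not be advertised as an advantage.
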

\begin{proof}
By evaluating explicitely the Laplace transform we obtain
\begin{align}
\mathcal{L} \left[\, ^f\mathfrak{D}_t u(t) \right] (\lambda) \, = \, & b \lambda \widetilde{u}(\lambda) - bu(0) + \int_0^\infty e^{-\lambda t} \int_0^t  \frac{d}{dt} u(t-s) \nu(s)ds \, dt \notag \\
= \, &  b \lambda \widetilde{u}(\lambda) - bu(0) + \int_0^\infty \int_s^\infty e^{-\lambda t} \frac{d}{dt} u(t-s) \, \nu(s) \, dt \, ds \notag \\
= \, &  b \lambda \widetilde{u}(\lambda) - bu(0) + \int_0^\infty e^{-\lambda s} \nu(s)ds \l \lambda \widetilde{u}(\lambda) - u(0) \r \notag \\
= \, & \lambda g(\lambda) \widetilde{u} (\lambda)- g(\lambda) u(0) \notag \\
= \, & f(\lambda) \widetilde{u} (\lambda) - \frac{f(\lambda)}{\lambda} u(0)
\end{align} 
where we used the relationships \eqref{gi} and \eqref{gimisura}.
\end{proof}
\begin{os}
By performing the same substitutions of Remark \ref{coincidriemann} it is easy to show that
\begin{align}
^f\mathfrak{D}_t u(t) \, = \, \frac{^Cd^\alpha}{dt^\alpha} u(t)
\end{align}
where $\frac{^Cd^\alpha}{dt^\alpha}$ is the Dzerbayshan-Caputo derivative defined in \eqref{definizionecaputo}.
\end{os}
It is well known that the Riemann-Liouville fractional derivative of a function $u \in AC([c, d)]$ exist almost everywhere in $[c, d]$ and can be written as (see \citet{kill} page 73)
\begin{align}
\frac{^Rd^\alpha}{dt^\alpha} u(t) \, = \, \frac{^Cd^\alpha}{dt^\alpha} u(t) + \frac{(t-c)^{-\alpha}}{\Gamma (1-\alpha)}  u(c).
\end{align}
Here is a more general result.
\begin{prop}
Let $^f\mathcal{D}_t^{(c, d)}$ and $^f\mathfrak{D}_t^{(c, d)}$ be respectively as in Definitions \ref{definizionederivata} and \ref{definizaltern}. We have that $^f\mathcal{D}_t^{(c, d)}u(t)$ exists almost everywhere in $[c, d]$ and can be written as
\begin{align}
^f\mathcal{D}_t^{(c, d)} u(t) \, = \, ^f\mathfrak{D}_t^{(c, d)} u(t) + \nu(t-c) u(c).
\label{tipocaputoriemann}
\end{align}
\end{prop}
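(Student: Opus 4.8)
The plan is to reduce everything to a single identity about the convolution term. Since $u\in AC([c,d])$, the function $u$ is differentiable a.e.\ with $u'\in L^1([c,d])$ and $u(t)=u(c)+\int_c^t u'(r)\,dr$; hence the ``$b$-parts'' of \eqref{formuladef} and \eqref{formuladefalterna} agree, $\frac{d}{dt}\big[b\,u(t)\big]=b\,\frac{d}{dt}u(t)$ a.e., and \eqref{tipocaputoriemann} becomes equivalent to
\begin{equation}
\frac{d}{dt}\int_0^{t-c} u(t-s)\,\nu(s)\,ds \, = \, \int_0^{t-c} \frac{\partial}{\partial t}u(t-s)\,\nu(s)\,ds + \nu(t-c)\,u(c), \qquad \text{a.e. }t\in[c,d].
\label{piano:conv}
\end{equation}
I would introduce the primitive $N(\tau):=\int_0^\tau \nu(s)\,ds$, $\tau\ge 0$: by \eqref{opiccolo} together with the fact that $\nu$ is non-increasing and finite on $(0,\infty)$, the function $N$ is finite, non-decreasing and locally absolutely continuous on $[0,\infty)$ with $N'=\nu$ a.e.\ and $N(0)=0$.

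The heart of the argument is to rewrite the left-hand side of \eqref{piano:conv} in a form free of the singularity of $\nu$ at the origin. After the change of variable $\tau=t-s$ it reads $\int_c^t u(\tau)\,\nu(t-\tau)\,d\tau$; substituting $u(\tau)=u(c)+\int_c^\tau u'(r)\,dr$ and interchanging the order of integration — legitimate by Tonelli's theorem, since $\int_c^t\!\int_c^\tau|u'(r)|\,dr\,\nu(t-\tau)\,d\tau\le \|u'\|_{L^1([c,d])}\,N(d-c)<\infty$ — one obtains
\begin{equation}
\int_0^{t-c} u(t-s)\,\nu(s)\,ds \, = \, u(c)\,N(t-c)+\int_c^t u'(r)\,N(t-r)\,dr .
\label{piano:split}
\end{equation}
Both terms on the right of \eqref{piano:split} are absolutely continuous in $t$ on $[c,d]$: the first because $N$ is, and the second because, by one more Tonelli interchange, $\int_c^t u'(r)N(t-r)\,dr=\int_c^t\big(\int_c^w u'(r)\nu(w-r)\,dr\big)\,dw$ is the primitive of a function in $L^1([c,d])$. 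Differentiating a.e.\ and using the Lebesgue differentiation theorem then gives $\frac{d}{dt}\big[u(c)N(t-c)\big]=u(c)\nu(t-c)$ and $\frac{d}{dt}\int_c^t u'(r)N(t-r)\,dr=\int_c^t u'(r)\nu(t-r)\,dr$; undoing the substitution $r=t-s$ in the last integral produces $\int_0^{t-c}\frac{\partial}{\partial t}u(t-s)\,\nu(s)\,ds$, and \eqref{piano:conv} follows. The same type of Tonelli estimate ($\int_c^d\!\int_0^{t-c}|u'(t-s)|\nu(s)\,ds\,dt\le\|u'\|_{L^1([c,d])}N(d-c)<\infty$) records that $^f\mathfrak{D}_t^{(c,d)}u(t)$, hence $^f\mathcal{D}_t^{(c,d)}u(t)$, is well defined for a.e.\ $t\in[c,d]$.

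The only genuine obstacle is precisely that $\nu$ may blow up at $0$ (e.g.\ $\nu(s)=s^{-\alpha}/\Gamma(1-\alpha)$ in the stable case of Remark \ref{coincidriemann}), so that a direct application of the Leibniz rule to $\int_c^t u(\tau)\nu(t-\tau)\,d\tau$ is not available: it would call for the meaningless term $\nu(0)\,u(t)$. Passing to the absolutely continuous representation of $u$ is what resolves this — it transfers one derivative from the convolution kernel onto $u$ and lets $\nu$ enter only through the convergent primitive $N$, whose value at the origin is $0$, which is exactly why the spurious boundary term vanishes and only $\nu(t-c)u(c)$ survives. Once \eqref{piano:split} is in place, the remaining steps — the two Tonelli interchanges and the two applications of the fundamental theorem of calculus — are routine.
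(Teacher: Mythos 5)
Your proof is correct and follows essentially the same route as the paper: both substitute the absolutely continuous representation $u(\tau)=u(c)+\int_c^\tau u'(r)\,dr$ and pass to the primitive $V(s)=\int_0^s\nu(w)\,dw$ of the tail, arriving at the same intermediate identity $\int_0^{t-c}u(t-s)\,\nu(s)\,ds=u(c)V(t-c)+\int_c^t u'(r)V(t-r)\,dr$ before differentiating. The only cosmetic difference is that the paper obtains this identity by an integration by parts where you use a Fubini--Tonelli interchange, and you are somewhat more explicit about the absolute continuity of the resulting terms and hence about the almost-everywhere differentiation.
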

\begin{proof} 
Let 
\begin{align}
V(s) = \int_0^s \nu(w) dw
\end{align}
which is convergent in view of \eqref{opiccolo}.
Since $u \in AC([c, d])$ we have for $c<s<d$
\begin{align}
u(s) = \int_c^s u^\prime (z) dz + u(c)
\end{align}
and therefore we can rewrite $^f\mathcal{D}^{(c, d)}_tu(t)$ as
\begin{align}
& ^f\mathcal{D}_t^{(c, d)}u(t) \notag \\
= \, & b\frac{d}{dt}u(t) + \frac{d}{dt} \int_0^{t-c}u(t-s) \nu(s)ds \notag \\
= \, & b\frac{d}{dt}u(t) + \frac{d}{dt}\int_c^t u(s) \nu(t-s)ds \notag \\
 = \, &b\frac{d}{dt}u(t) + \frac{d}{dt}\int_c^t \l \int_c^s u^\prime (z) dz + u(c) \r \nu(t-s)ds \notag \\
 = \, & b\frac{d}{dt}u(t) +\frac{d}{dt} \left[ -V(t-s) \l \int_c^s u^\prime (z)dz + u(c) \r \right]_{s=c}^{s=t} +\frac{d}{dt} \int_c^t V(t-s) u^\prime(s) ds \notag \\
 = \, & b\frac{d}{dt}u(t) + \frac{d}{dt} \left[-V(0) \int_c^t u^\prime (z)dz - u(c) + V(t-c)u(c) \right]\notag \\
 & +V(0) u^\prime(t)+\int_c^t u^\prime (s) \nu(t-s)ds \notag \\
 = \, & b\frac{d}{dt}u(t)+ \nu(t-c)u(c) + \int_c^t \frac{d}{ds}u(s) \; \nu(t-s)ds \notag \\
 = \, & ^f\mathfrak{D}^{(c,d)}_t \, u(t) + \nu(t-c)u(c).
\end{align}
In the fourth step we performed an integration by parts.
\end{proof}

\subsection{Convolution-type derivatives on the whole real axis}
In this section we develop a generalized space-derivative with respect to a Bernstein function $f$ with domain on the whole real axis $\mathbb{R}$, by following the logic inspiring the Weyl derivatives.
\begin{defin}
\label{definderivataspace}
Let $f$ and $\nu(s)$ be as in Definition \ref{definizionederivata}. We define the generalized Weyl derivative, according to the Bernstein function $f$, on the whole real axis as
\begin{align}
^f\mathpzc{D}^+_x u(x) \, := \,  \left[ b \frac{d}{dx} u(x) +  \int_{0}^\infty \, \frac{\partial}{\partial x} u(x - s) \,  \nu(s)ds \right], \qquad x \in \mathbb{R},
\label{miadestra}
\end{align}
and
\begin{equation}
^f\mathpzc{D}^-_x u(x) \, := \, - \left[ b \frac{d}{dx}  u(x)  + \int_{0}^\infty \, \frac{\partial}{\partial x} u(x+s) \,  \nu(s)ds \right], \qquad x \in \mathbb{R}.
\label{miasinistra}
\end{equation}
\end{defin}
Some remarks on the domain of definition of \eqref{miadestra} and \eqref{miasinistra} are stated in Section \ref{sezionefilippo}.
\begin{lem}
\label{fourierderivataspace}
Let $^f\mathpzc{D}^{\pm}_x$ be as in Definition \ref{definderivataspace}. We have that
\begin{equation}
\mathcal{F} \left[ \, ^f\mathpzc{D}_x^+ u(x) \right] (\xi) \, = \, f(-i\xi) \widehat{u}(\xi)
\end{equation}
and
\begin{align}
\mathcal{F} \left[ \, ^f\mathpzc{D}_x^- u(x) \right] (\xi) \, = \, f(i\xi) \widehat{u}(\xi).
\label{repetition}
\end{align}
\end{lem}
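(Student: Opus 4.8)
The plan is to prove the identity for ${}^f\mathpzc{D}^+_x$ first and then obtain the one for ${}^f\mathpzc{D}^-_x$ by a reflection argument. Throughout I would fix $u$ in a class for which all the manipulations below are licit --- say the Schwartz space $\mathcal{S}(\mathbb{R})$, or the natural domain discussed in Section \ref{sezionefilippo} --- so that differentiation under the integral sign, the vanishing of boundary terms at $\pm\infty$, and Fubini's theorem all apply; the general case, if wanted, then follows by density.

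The first step is to rewrite ${}^f\mathpzc{D}^+_x u$ in terms of the L\'evy measure $\bar{\nu}$ rather than its tail $\nu$. Writing $\nu(s)=a+\bar{\nu}(s,\infty)$, using $\frac{\partial}{\partial x}u(x-s)=-\frac{\partial}{\partial s}u(x-s)$, and applying Fubini to $\int_0^\infty u'(x-s)\,\bar{\nu}(s,\infty)\,ds=\int_{(0,\infty)}\bar{\nu}(dz)\int_0^z u'(x-s)\,ds$, one obtains
\begin{equation*}
{}^f\mathpzc{D}^+_x u(x)=b\,u'(x)+a\,u(x)+\int_{(0,\infty)}\bigl(u(x)-u(x-z)\bigr)\,\bar{\nu}(dz).
\end{equation*}
The point of this rewriting is that the right-hand side is manifestly integrable on $\mathbb{R}$: the elementary bound $|u(x)-u(x-z)|\leq\min\{2\|u\|_\infty,\,z\|u'\|_\infty\}$ combined with the integrability condition \eqref{integrability} shows that $x\mapsto\int_{(0,\infty)}(u(x)-u(x-z))\,\bar{\nu}(dz)$ lies in $L^1(\mathbb{R})$; hence $\mathcal{F}[\,{}^f\mathpzc{D}^+_x u\,]$ is a genuine function, and the same estimate justifies interchanging $\mathcal{F}$ with $\int_{(0,\infty)}\cdots\,\bar{\nu}(dz)$.

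Then I would Fourier-transform term by term, using the standard rules $\mathcal{F}[u'](\xi)=(-i\xi)\widehat{u}(\xi)$ and $\mathcal{F}[u(\,\cdot\,-z)](\xi)=e^{i\xi z}\widehat{u}(\xi)$, which yields
\begin{equation*}
\mathcal{F}\bigl[\,{}^f\mathpzc{D}^+_x u\,\bigr](\xi)=\widehat{u}(\xi)\Bigl(b(-i\xi)+a+\int_{(0,\infty)}\bigl(1-e^{-z(-i\xi)}\bigr)\,\bar{\nu}(dz)\Bigr).
\end{equation*}
The parenthesis is exactly the L\'evy--Khintchine representation \eqref{bernsteingenerica} of $f$ with the argument $x$ replaced by $-i\xi$; this is legitimate because $\int_0^\infty(1-e^{-zw})\,\bar{\nu}(dz)$ converges for every $w$ with $\Re w\geq0$ by the same bound as above, so $f$ extends continuously to the closed right half-plane. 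Hence $\mathcal{F}[\,{}^f\mathpzc{D}^+_x u\,](\xi)=f(-i\xi)\widehat{u}(\xi)$. (Alternatively one may work directly from Definition \ref{definderivataspace}, interchange $\mathcal{F}$ with $\int_0^\infty\cdots\nu(s)\,ds$, and recognize $b+\int_0^\infty e^{i\xi s}\nu(s)\,ds$ as the boundary value $g(-i\xi)=\lim_{\varepsilon\downarrow0}g(\varepsilon-i\xi)$ of the completely monotone function $g$ of \eqref{gi}--\eqref{gimisura}, so that $(-i\xi)g(-i\xi)=f(-i\xi)$; this variant costs an extra Abel-type limit.)

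For ${}^f\mathpzc{D}^-_x$ I would use the reflection $\check{u}(x):=u(-x)$: a direct check on Definition \ref{definderivataspace} shows that ${}^f\mathpzc{D}^+_x$ applied to $\check{u}$ equals the reflection of ${}^f\mathpzc{D}^-_x u$, so that applying $\mathcal{F}$, recalling $\mathcal{F}[\check{v}](\xi)=\widehat{v}(-\xi)$, and invoking the part already proved gives $\mathcal{F}[\,{}^f\mathpzc{D}^-_x u\,](-\xi)=f(-i\xi)\widehat{u}(-\xi)$; replacing $\xi$ by $-\xi$ produces \eqref{repetition}. (Equivalently, one may repeat the computation above with the analogous rewriting ${}^f\mathpzc{D}^-_x u(x)=-b\,u'(x)+a\,u(x)+\int_{(0,\infty)}(u(x)-u(x+z))\,\bar{\nu}(dz)$.) The only genuinely delicate point in the whole argument is the passage from the half-line representation to the purely imaginary argument --- that is, justifying the exchange of $\mathcal{F}$ with the $\bar{\nu}$-integral and the validity of \eqref{bernsteingenerica} at $x=-i\xi$; once the integrability condition \eqref{integrability} is brought in, everything else is routine bookkeeping with the classical Fourier identities.
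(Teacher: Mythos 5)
Your proof is correct, and at its core it is the same computation as the paper's: both reduce the Fourier transform to the L\'evy--Khintchine representation \eqref{bernsteingenerica} evaluated at $\mp i\xi$, with the passage from the tail $\nu(s)=a+\bar{\nu}(s,\infty)$ to the measure $\bar{\nu}(ds)$ effected by an integration by parts (in your case, Fubini plus the fundamental theorem of calculus). The difference is one of ordering. The paper transforms first, arriving at the intermediate expression $-i\xi\int_0^\infty e^{i\xi s}\nu(s)\,ds\,\widehat{u}(\xi)$, and only then integrates by parts in $s$; you instead rewrite the operator itself in the generator (Phillips) form $b u'+au+\int_{(0,\infty)}\bigl(u(\cdot)-u(\cdot-z)\bigr)\bar{\nu}(dz)$ --- the same form that reappears later in \eqref{piuomeno} --- and transform term by term. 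Your ordering buys a genuine technical advantage when $a>0$: since $\nu(s)\to a$ as $s\to\infty$, the paper's intermediate integral $\int_0^\infty e^{i\xi s}\nu(s)\,ds$ is not absolutely convergent and its integration by parts tacitly requires an Abel-type regularization, whereas your bound $|u(x)-u(x-z)|\le\min\{2\|u\|_\infty,\,z\|u'\|_\infty\}$ together with \eqref{integrability} makes every interchange legitimate. Your reflection argument for $\,^f\mathpzc{D}^-_x$ (in place of the paper's ``repeating the same calculation'') is also correct and consistent with the convention $\widehat{u}(\xi)=\int_{\mathbb{R}} e^{i\xi x}u(x)\,dx$ implicit in the paper's computation.
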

\begin{proof}
By evaluating the first Fourier transform explicitely, we obtain
\begin{align}
\mathcal{F} \left[ ^f\mathpzc{D}^{+}_x u(x) \right] (\xi) \, = \, & -bi\xi \widehat{u}(\xi)  - i\xi \, \mathcal{F} \left[  \int_0^\infty u(x - s) \nu(s)ds \right] (\xi) \notag \\
= \, &  -bi\xi \widehat{u}(\xi) -i\xi \int_0^\infty \int_{\mathbb{R}} e^{i\xi z+i\xi s} u(z) \, dz \, \nu(s)ds \notag \\
= \, & -bi\xi \widehat{u}(\xi) - i \xi \int_0^\infty ds  \, e^{i\xi s} \, \l a+ \int_s^\infty \bar{\nu}(dz) \r \widehat{u}(\xi)
\end{align}
and by integrating by parts we get that
\begin{align}
\mathcal{F} \left[ ^f\mathpzc{D}^{+}_x u(x) \right] (\xi) \, = \, & a\widehat{u}(\xi)-bi\xi \widehat{u}(\xi) + \int_0^\infty \l 1-e^{i\xi s} \r \bar{\nu} (ds) \, \widehat{u}(\xi) \notag \\
= \, & f(-i\xi) \, \widehat{u}(\xi).
\end{align}
By repeating the same calculation one can easily prove \eqref{repetition}.
\end{proof}
\begin{os}
Definitions \eqref{miadestra} and \eqref{miasinistra} are consistent with the Weyl definition of fractional derivatives on the whole real axis which are, for $\alpha \in (0,1)$ and $x \in \mathbb{R}$, (see \cite{kill})
\begin{equation}
\frac{^+d^\alpha }{d x^\alpha}u(x) \, = \, \frac{1}{\Gamma (1-\alpha)} \frac{d}{dx} \int_{-\infty}^x \frac{u(s)}{(x-s)^{\alpha}} ds, \quad \textrm{right derivative},
\label{weyldestra}
\end{equation}
and
\begin{align}
\frac{^-d^\alpha }{d x^\alpha}u(x) \, = \, -\frac{1}{\Gamma (1-\alpha)} \frac{d}{dx} \int_x^\infty \frac{u(s)}{(s-x)^{\alpha}} ds,
\label{weylsinistra} \quad \textrm{left derivative.}
\end{align}
We have
\begin{align}
^f\mathpzc{D}_x^{\pm} u(x) \, = \,  \frac{^{\pm}d^\alpha }{d x^\alpha}u(x), \qquad x \in \mathbb{R}.
\label{243}
\end{align}
We resort to the fact that (see \cite{kill} page 90)
\begin{equation}
\mathcal{F} \left[ \frac{^{\pm}\partial^\alpha}{\partial x^\alpha} u(x) \right] (\xi) \, = \, (\mp i\xi)^\alpha \widehat{u}(\xi)
\label{fourierdiweyl}
\end{equation}
and thus by combining \eqref{fourierdiweyl} with Lemma \ref{fourierderivataspace} the proof of \eqref{243} is complete. The reader can also check the result by performing the substitution $b=0$ and
\begin{equation}
\nu(s)ds \, = \, \frac{s^{-\alpha}}{\Gamma (1-\alpha)} ds
\end{equation}
in \eqref{miadestra} and \eqref{miasinistra} which yields \eqref{weyldestra} and \eqref{weylsinistra} with a change of variable.
\end{os}

\section{Subordinators, hitting-times and continuous time random walks}
A subordinator $^f\sigma (t)$, $t\geq 0$, is a stochastic process in continuous time with non-decreasing paths (see more on subordinators in \citet{bertoinb, bertoins}) and values in $[0, \infty]$ where $\infty$ is an absorbing state (cemetery). The process $\sigma^f(t)$, $t \geq 0$, is a subordinator if it has independent and homogeneous increments on $[0, \zeta)$ where
\begin{equation}
\zeta \, = \, \inf \ll t \geq 0 : \sigma^f(t) = \infty \rr.
\label{lifetime}
\end{equation}
If $\zeta \stackrel{\textrm{a.s.}}{=} \infty$ the process $\sigma^f$ is said to be a strict subordinator since it has stationary and independent increments in the ordinary sense. In this case the Laplace exponent of $^f\sigma$ is
\begin{align}
f(\lambda) \, = \, b\lambda + \int_0^\infty \l 1-e^{-\lambda s} \r \bar{\nu}(ds)
\end{align}
that is $a=0$.

The transition probabilities of subordinators $\mu_t(B) = \Pr \ll \, ^f\sigma (t) \in B \rr$, $B \subset [0, \infty)$ Borel, $t>0$, are convolution semigroups of sub-probability measure with the following property concerning the Laplace transform
\begin{equation}
\mathcal{L} \left[ \mu_t \right] (\lambda) \, = \, e^{-tf(\lambda)}
\label{semigruppoconvoluz}
\end{equation}
where $f$ is a Bernstein function having representation \eqref{bernsteingenerica}.
A family $\mu_t$, $t>0$, of sub-probability measures on $\mathbb{R}^n$ is called a convolution semigroup on $\mathbb{R}^n$ if it satisfies the conditions
\begin{enumerate}
\item[$\bullet$] $\mu_t \l \mathbb{R}^n \r \leq 1$, $\forall t \geq 0$;
\item[$\bullet$] $\mu_s * \mu_t \, = \, \mu_{t+s}$, $\forall s, t \geq 0$, and $\mu_0 = \delta_0$;
\item[$\bullet$] $\mu_t \to \delta_0$, vaguely as $t \to 0$,
\end{enumerate}
where we denoted by $\delta_0$ the Dirac point mass at zero. The fact that the tail function $s \to \nu(s)$ of the L\'evy measure $\bar{\nu}$ is absolutely continuous on $(0, \infty)$ and that $\bar{\nu}(0, \infty) = \infty$ is a sufficient condition for saying the transition probabilities of the corresponding subordinator are absolutely continuous (see \citet{satolevy}, Theorem 27.7). 

It has been shown that any subordinator has a Laplace exponent as in \eqref{semigruppoconvoluz} and that any Bernstein function with representation \eqref{bernsteingenerica} is the Laplace exponent of a subordinator (see for example \cite{bertoins}). A subordinator is a step process if its associated Bernstein function $f$ is bounded. Looking at the representation \eqref{bernsteingenerica} we see that a Bernstein function is bounded if $\bar{\nu}(0, \infty) < \infty$ and $b=0$. If these conditions are not fulfilled (and thus $b>0$ and $\bar{\nu}(0, \infty) = \infty$) the subordinator is a strictly increasing process. 

The inverse process of a subordinator is defined as
\begin{equation}
^fL (t) \, = \, \inf \ll s > 0 : \, ^f\sigma (s) > t \rr, \qquad s,t >0,
\label{definizinverso}
\end{equation}
and thus $^fL$ is the hitting-time of $^f\sigma$ since $^f\sigma$ has non-decreasing paths (see \citet{bertoinb, bertoins}). With this in hand we note that $^fL$ is again a non-decreasing process but in general it has non-stationary and non-independent increments. In what follows we develop some properties of the distribution of $^fL(t)$, $t\geq 0$, denoted by $l_t(B) = \Pr \ll \, ^fL(t) \in B \rr$.
\begin{lem}
Let $^f\sigma (t)$, $t\geq 0$, and $^fL(t)$, $t\geq 0$, be respectively a subordinator and its inverse. Let $f$ be the Laplace exponent of $\, ^f\sigma$ represented as in \eqref{bernsteingenerica} for $a, b \geq 0$. Let $\nu(s)$ be the tail of the L\'evy measure $\bar{\nu}$ and $l_t(B)$ the distribution of $^fL$. Suppose that $s \to \nu(s)$ is absolutely continuous and that $\bar{\nu}(0, \infty)=\infty$. We have that
\begin{equation}
\mathcal{L} \left[ l_{\puntomio}(s, \infty) \right] ( \lambda) = \frac{1}{\lambda} e^{-sf(\lambda)}.
\end{equation}
\end{lem}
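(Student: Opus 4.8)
The plan is to exploit the duality between the subordinator and its hitting-time together with the Laplace-transform characterization \eqref{semigruppoconvoluz} of the transition probabilities $\mu_t$. The starting point is the elementary sample-path identity coming from the monotonicity of $t \mapsto\, ^f\sigma(t)$: for fixed $s > 0$ and $t \geq 0$ one has the events
\[
\ll \, ^fL(t) > s \rr \, = \, \ll \, ^f\sigma(s) \leq t \rr
\]
up to a null set, so that $l_t(s, \infty) = \Pr\ll\, ^fL(t) > s\rr = \Pr\ll\, ^f\sigma(s) \leq t\rr = \mu_s[0, t]$. (This is where the hypothesis $\bar\nu(0,\infty) = \infty$ enters: it guarantees, via \citet{satolevy} Theorem 27.7 as recalled in the text, that $\mu_s$ has a density and that the boundary terms $\Pr\ll\, ^f\sigma(s) = t\rr$ vanish, so the inequalities $\leq$ versus $<$ are interchangeable and the distribution function is continuous.) Thus the function $t \mapsto l_t(s, \infty)$ is simply the cumulative distribution function of $^f\sigma(s)$ evaluated at $t$.

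Next I would take the Laplace transform in the variable $t$. Since $l_t(s, \infty) = \int_0^t \mu_s(dy)$ is the primitive of the measure $\mu_s$, the classical rule for the Laplace transform of an integral gives
\[
\mathcal{L}\left[ l_{\puntomio}(s, \infty) \right](\lambda) \, = \, \frac{1}{\lambda} \, \mathcal{L}[\mu_s](\lambda) \, = \, \frac{1}{\lambda} \, e^{-s f(\lambda)},
\]
where the last equality is exactly \eqref{semigruppoconvoluz}. Care must be taken that the Laplace transform of $\mu_s$ here is taken with respect to the spatial variable of the measure $\mu_s$ (the argument of $^f\sigma(s)$), which is the same variable $t$ in which we are transforming $l_t(s,\infty)$; the semigroup property plays no direct role, only the single-time Laplace exponent formula. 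One can either invoke the general fact $\mathcal{L}\left[\int_0^{\puntomio} g(y)\,dy\right](\lambda) = \lambda^{-1}\mathcal{L}[g](\lambda)$ applied to the density of $\mu_s$, or equivalently write $l_t(s,\infty) = (\mathds{1}_{[0,\infty)} * \mu_s)(t)$ and use that the Laplace transform of the Heaviside function is $1/\lambda$.

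The only genuine subtlety — and the step I would flag as requiring the stated hypotheses — is the justification of the event identity $\ll\, ^fL(t) > s\rr = \ll\, ^f\sigma(s) \leq t\rr$ and the attendant measurability/continuity issues: one must know that $^f\sigma$ is right-continuous with non-decreasing paths (true for subordinators), that $^fL$ is well defined as the stated infimum, and that there is no atom issue at the boundary $\ll\, ^f\sigma(s) = t\rr$ so that the distribution function of $^fL(t)$ has the tail $l_t(s,\infty)$ rather than $l_t[s,\infty)$ differing on a non-null set. Granting absolute continuity of $s \mapsto \nu(s)$ and $\bar\nu(0,\infty) = \infty$, these points are standard, and the remainder of the argument is the two-line Laplace computation above. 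I would also remark, for later use in the paper, that this formula combined with Lemma \ref{lemmaplacealtern} is precisely what links $l_t$ to the operator $^f\mathfrak{D}_t$.
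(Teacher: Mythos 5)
Your proof is correct and follows essentially the same route as the paper: the duality $\Pr\ll\, ^fL(t)>s\rr = \Pr\ll\, ^f\sigma(s)<t\rr$ from monotonicity of the paths, followed by a Laplace transform in $t$ of the primitive $\int_0^t \mu_s(dz)$, which the paper carries out by Fubini and you by the standard rule $\mathcal{L}\left[\int_0^{\puntomio} g\right](\lambda)=\lambda^{-1}\widetilde{g}(\lambda)$. Your extra care about the boundary event $\ll\, ^f\sigma(s)=t\rr$ being null is a welcome clarification that the paper leaves implicit.
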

\begin{proof}
We resort to the fact that $^f\sigma$ has non-decreasing paths and thus, in view of the construction \eqref{definizinverso} of $^fL$ we have
\begin{align}
\Pr \ll ^fL(t) > s \rr \, = \, \Pr \ll ^f\sigma (s) < t \rr.
\label{probsubinv}
\end{align}
In view of \eqref{probsubinv} we observe that
\begin{align}
\int_0^\infty e^{-\lambda t} l_t(s, \infty] \, dt \, = \, \int_0^\infty e^{-\lambda t} \mu_s[0, t) dt
\end{align}
and thus
\begin{equation}
\int_0^\infty e^{-\lambda t} l_t[s, \infty) dt \, = \, \int_0^\infty e^{-\lambda t} \int_0^t \mu_s(dz) \, dt \, = \, \frac{1}{\lambda} e^{-sf(\lambda)}.
\label{cheroba}
\end{equation}
\end{proof}
\begin{prop}
\label{lemmasullinverso}
Let $^f\sigma (t)$, $t\geq 0$, be the subordinator with Laplace exponent $f$ represented by \eqref{bernsteingenerica} for $a \geq 0$, $b \geq 0$. Let $\nu$ be the tail of the L\'evy measure $\bar{\nu}$. Assume that $\bar{\nu}(0, \infty)=\infty$ and that $ s \to \nu(s) = a+ \bar{\nu}(s, \infty)$ is absolutely continuous on $(0, \infty)$. Let $^fL(t)$, $t\geq 0$, be the inverse of $^f\sigma$, in the sense of \eqref{definizinverso}, with distribution $l_t(B) = \Pr \ll \, ^fL(t) \in B \rr$. We have the following results.
\begin{enumerate}
\item The distribution $l_t$ have a density such that $l_t(ds) = l_t(s)ds$ and $l_t(s) = b\mu_s(t) + (\nu(t) *  \mu_s(t))$ where with abuse of notation we denoted with $l_t(s)$ and $\mu_s(t)$ respectively the density of $l_t(ds)$ and $\mu_s(dt)$ and the symbol $*$ stands for the Laplace convolution $\int_0^t \mu_s(t-z) \nu(z) dz$. Furthermore $\mathcal{L} \left[ l_{\puntomio}(s) \right] (\lambda) = \frac{f(\lambda)}{\lambda} e^{-sf(\lambda)}$.
\item $\lim_{h \to 0} l_{t+h} = l_t$ $\forall t\geq 0$ and $\lim_{t \to 0} l_t[0, \infty) = \delta_0[0, \infty)$.
\item $l_t(0) = \nu(t)$, $\forall t>0$.
\item $l_t[0, \infty) = 1$, $\forall a,b \geq 0$.
\end{enumerate}
\end{prop}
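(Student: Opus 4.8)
The plan is to read all four assertions off two facts already available: the hitting-time identity $\Pr\{{}^fL(t)>s\}=\Pr\{{}^f\sigma(s)<t\}=\mu_s[0,t)$ from \eqref{probsubinv}, and the Laplace identity $\mathcal L[l_{\puntomio}(s,\infty)](\lambda)=\lambda^{-1}e^{-sf(\lambda)}$ of the previous Lemma, together with the fact (\citet{satolevy}, Theorem~27.7) that under the standing hypotheses — $s\mapsto\nu(s)$ absolutely continuous and $\bar\nu(0,\infty)=\infty$ — the transition laws are absolutely continuous, $\mu_s(dt)=\mu_s(t)\,dt$.

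For (1), set $h_t(s):=b\mu_s(t)+\int_0^t\mu_s(t-z)\,\nu(z)\,dz$; this is well defined and, in $t$, locally integrable, since $\mu_s$ is a sub-probability density and $\nu$ is locally integrable by \eqref{opiccolo} and monotonicity. Using $e^{-sf(\lambda)}=\mathcal L[\mu_s(\puntomio)](\lambda)$ from \eqref{semigruppoconvoluz}, the convolution theorem, and $f(\lambda)/\lambda=g(\lambda)=b+\int_0^\infty e^{-\lambda z}\nu(z)\,dz$ from \eqref{gi}--\eqref{gimisura}, one computes $\mathcal L_t[h_{\puntomio}(s)](\lambda)=g(\lambda)e^{-sf(\lambda)}=\frac{f(\lambda)}{\lambda}e^{-sf(\lambda)}$; integrating in $s$ (Fubini, everything nonnegative) gives $\mathcal L_t\big[\int_s^\infty h_{\puntomio}(\sigma)\,d\sigma\big](\lambda)=\frac{g(\lambda)}{f(\lambda)}e^{-sf(\lambda)}=\lambda^{-1}e^{-sf(\lambda)}$, which by the previous Lemma equals $\mathcal L_t[l_{\puntomio}(s,\infty)](\lambda)$. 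Laplace uniqueness then yields $l_t(s,\infty)=\int_s^\infty h_t(\sigma)\,d\sigma$, so $l_t$ is absolutely continuous in $s$ with density $l_t(s)=b\mu_s(t)+(\nu*\mu_s)(t)$, and consequently $\mathcal L[l_{\puntomio}(s)](\lambda)=\frac{f(\lambda)}{\lambda}e^{-sf(\lambda)}$.

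For (2) and (4) I would argue with distribution functions. Fix $s>0$: since $\mu_s$ is absolutely continuous, $t\mapsto\mu_s[0,t)$ is continuous and non-decreasing, hence $1-\mu_s[0,t+h)\to 1-\mu_s[0,t)$ as $h\to0$ for every $s$, i.e. the d.f.\ of ${}^fL(t+h)$ converges pointwise to that of ${}^fL(t)$; since moreover $\mu_s[0,t)\to0$ as $s\to\infty$ (the subordinator grows to $\infty$, as $\bar\nu(0,\infty)=\infty$ forces $f(1)>0$), these are genuine probability d.f.s with no escape of mass and $l_{t+h}\Rightarrow l_t$. Letting $t\downarrow0$ in $1-\mu_s[0,t)\to1$ gives $l_t\Rightarrow\delta_0$, the second assertion (consistent with ${}^fL(0)=0$ a.s.). For (4), $\{{}^fL(t)=\infty\}=\{\sup_{s\ge0}{}^f\sigma(s)\le t\}$ by definition of the hitting-time and right-continuity of paths; if $a=0$ the subordinator increases to $\infty$, while if $a>0$ it is absorbed at the cemetery at the lifetime $\zeta$, which satisfies $\Pr\{\zeta>t\}=\Pr\{{}^f\sigma(t)<\infty\}=e^{-tf(0^+)}=e^{-ta}$ and is a.s.\ finite, so ${}^fL(t)\le\zeta<\infty$; in both cases the event is null and $l_t[0,\infty)=1$.

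Finally, (3) follows by letting $s\downarrow0$ in the formula of (1): $\mu_s\Rightarrow\delta_0$ makes $b\mu_s(t)\to0$ for $t>0$ and $(\nu*\mu_s)(t)=\int_{[0,t]}\nu(t-z)\,\mu_s(dz)\to\nu(t)$, so $l_t(0)=\nu(t)$; the discontinuities of the non-increasing $\nu$ are sidestepped by instead passing to the limit in $\mathcal L[l_{\puntomio}(s)](\lambda)=g(\lambda)e^{-sf(\lambda)}\to g(\lambda)=\mathcal L[b\delta_0+\nu(\puntomio)](\lambda)$, which identifies $l_{\puntomio}(0)=\nu$ on $(0,\infty)$. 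I expect the main obstacle to be the rigor of (1) — transporting the identity from the $t$-Laplace domain to a bona fide statement about the $s$-density of $l_t$, i.e.\ invoking \citet{satolevy} for the absolute continuity of $\mu_s$ and checking the local integrability needed for inversion — together with the limit interchange in (3).
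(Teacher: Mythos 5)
Your proof of item (1), which is the substantive part of the proposition, follows exactly the paper's route: compute the $t$-Laplace transform of $b\mu_s(t)+(\nu*\mu_s)(t)$ via \eqref{gimisura} to get $\frac{f(\lambda)}{\lambda}e^{-sf(\lambda)}$, integrate in $s$ to recover $\lambda^{-1}e^{-sf(\lambda)}$, match it with \eqref{cheroba}, and invoke uniqueness of the Laplace transform. The remaining items are handled by slightly different but equally valid means. For (2) you argue through pointwise convergence of the distribution functions $t\mapsto\mu_s[0,t)$, whereas the paper passes the limit directly inside the integral of the density formula; both rest on the absolute continuity of $\mu_s$. For (4) the paper simply evaluates $\mathcal{L}\left[l_{\puntomio}[0,\infty)\right](\lambda)=1/\lambda$ and inverts, while you give a pathwise argument distinguishing $a=0$ (the subordinator increases to $\infty$) from $a>0$ (absorption at the exponential lifetime $\zeta$, so $^fL(t)\le\zeta<\infty$); your version is more probabilistically informative but requires the extra facts about $\zeta$, while the paper's is a one-line consequence of the lemma already proved. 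For (3) you are actually more careful than the paper: the paper formally substitutes $\mu_0=\delta_0$ into the density formula, whereas you justify the limit $s\downarrow 0$ through the convergence $\mathcal{L}[l_{\puntomio}(s)](\lambda)=g(\lambda)e^{-sf(\lambda)}\to g(\lambda)=\mathcal{L}[b\delta_0+\nu(\puntomio)](\lambda)$, which correctly identifies $l_t(0)=\nu(t)$ for $t>0$ and sidesteps the issue that weak convergence $\mu_s\Rightarrow\delta_0$ does not by itself give pointwise convergence of densities. No gaps; the proposal is sound.
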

\begin{proof}
\begin{enumerate}
\item Since we assume $\bar{\nu}(0, \infty)=\infty$ and $s \to \nu(s)$ is absolutely continuous on $(0, \infty)$, we have that from Theorem 27.7 in \cite{satolevy} the transition probabilities $\mu_t(dx)$ are absolutely continuous and therefore have a density $\mu_t(x)$. Thus we write
\begin{align}
\mathcal{L}\left[ b\mu_s(\puntomio) + \l \mu_s(\puntomio) * \nu(\puntomio) \r \right]( \lambda) \, = \, & be^{-sf(\lambda)}+ \int_0^\infty e^{-\lambda t} \int_0^t \mu_s(t-z) \nu(z) dz \, dt \notag \\
 = \, & be^{-sf(\lambda)}+ \int_0^\infty dz  \int_z^\infty dt \, e^{-\lambda t} \, \mu_s(t-z)  \nu(z) \notag \\
 = \, & \frac{f(\lambda)}{\lambda} e^{-sf(\lambda)},
 \label{flour}
\end{align}
where we used \eqref{gimisura}. From \eqref{flour} we get
\begin{align}
  \int_s^\infty \mathcal{L} \left[ b \mu_w(\puntomio) + \l \mu_w(\puntomio) * \nu(\puntomio) \r  \right]  (\lambda) \, dw \, = \, \int_s^\infty \frac{f(\lambda)}{\lambda} e^{-wf(\lambda)} dw \, = \, \frac{1}{\lambda} e^{-sf(\lambda)}.
 \label{dovrebbe}
\end{align}
Since \eqref{dovrebbe} coincides with \eqref{cheroba} we can write
\begin{align}
\int_s^\infty \l b \mu_w(t)+ ( \mu_w(t) * \nu(t)) \r dw \, = \, l_t(s, \infty)
\end{align}
which completes the proof. 
\item We have
\begin{align}
\lim_{h \to 0} l_{t+h}[s, \infty) \, = \, & \lim_{h \to 0} \int_s^\infty \l b \mu_s(t+h) + \int_0^{t+h} \mu_s(t+h-z) \nu(z) dz \r ds \notag \\
 = \, &  l_t[s, \infty)
\end{align}
since $\mu_s(t)$ is a density. Furthermore
\begin{align}
\lim_{t \downarrow 0}   l_t[0, \infty) \, = \, \lim_{t \downarrow 0} \int_0^\infty \l b \mu_s(t) + \int_0^t \mu_s(t-z) \nu(z) dz \r ds \, = \, \delta_0[0, \infty).
\end{align}
\item This is obvious since for $t>0$, $l_t(0) \, = \, b\mu_0(t) + \nu(t) * \mu_0 (t) = \nu(t) $.
\item The proof of this can be carried out by observing that
\begin{align}
\int_0^\infty e^{-\lambda t } l_t[0, \infty) \, dt \, = \, \int_0^\infty  e^{-\lambda t} l_t[s, \infty) \, dt  \bigg|_{s = 0} \, = \, \frac{1}{\lambda} e^{-sf(\lambda)} \bigg|_{s=0} \, = \, \frac{1}{\lambda}.
\end{align}
\end{enumerate}
\end{proof}

Subordinators are related to Continuous Time Random Walks (CTRWs). The CTRWs (introduced in \citet{montrol}) are processes in continuous time in which the number of jumps performed in a certain amount of time $t$ is a random variable, as well as the jump's length. For example, the stable subordinator can be viewed (in distribution) as the limit of a CTRW performing a Poissonian number of power-law jumps (see for example \citet{meer12}). In \citet{meer04}, among other things, the authors pointed out that the limit process of a CTRW with infinite-mean waiting times converge to a L\'evy motion time-changed by means of the hitting-time $L^\alpha (t)$, $t\geq 0$, of the stable subordinator $\sigma^\alpha(t)$, $t\geq 0$. Since subordinators are also L\'evy processes they can be decomposed according to the L\'evy-It\^{o} decomposition (\citet{itodec}). By following the logic of the L\'evy-It\^{o} decomposition we derive a CTRW converging (in distribution) to a subordinator with laplace exponent $f$ and having a hitting-time converging to its inverse. Our CTRW is therefore the sum of a pure drift and a compound Poisson. The distribution of the jumps' length need some attention. In particular we define i.i.d. random variables $Y_j$ representing the random length of the jump, with law
\begin{align}
p_{Y_j}(dy) \, = \,  \frac{1}{\nu(\gamma)} \, \l \bar{\nu}(dy) + a \, \delta_\infty  \r \,  \mathds{1}_{y>\gamma}  , \qquad \gamma > 0, \, \forall j = 1, \cdots, n,
\label{leggedelley}
\end{align}
where $\delta_\infty$ indicates the Dirac point mass at $\infty$ and $a \geq 0$.
In \eqref{leggedelley} $\bar{\nu}$ and $\nu$ are respectively the L\'evy measure and its tail as defined in equations from \eqref{bernsteingenerica} to \eqref{finoaqui} and upon which the definitions of convolution-type derivatives of previous section are based. The parameter $a \geq 0$ is that in \eqref{bernsteingenerica} and it is known in literature as the killing rate of the subordinator. The distribution \eqref{leggedelley} can be taken as follows. The probability of a jump of length $y>\gamma > 0$ is given by the normalized L\'evy measure when $a = 0$. When $a>0$ the probability of a jump of infinite length increases since $\bar{\nu}(y) \stackrel{y \to \infty}{\longrightarrow}0$ and thus $\Pr \ll Y \in dy \rr/dy \stackrel{y \to \infty}{\longrightarrow} a / \nu(\gamma)$. When constructing a CTRW with Poisson waiting times and jump length's distribution \eqref{leggedelley} by choosing $a>0$ we obtain a limit process (for $\gamma \to 0$) assuming value $+\infty$ from a certain time $\zeta<\infty$ on. Usually $\zeta$ is called the lifetime of the process (see \cite{bertoins}). The case $a >0$ in \eqref{leggedelley} therefore gives rise to the so-called killed subordinators. A killed subordinator $^f\widehat{\sigma}_t$, is defined as
\begin{align}
^f\widehat{\sigma}_t \, = \, \begin{cases} ^f\sigma_t, \qquad & t < \zeta, \\ +\infty, & t \geq \zeta, \end{cases}
\end{align}
where $\zeta$ is the lifetime defined in \eqref{lifetime}.
Obviously $a=0$ implies $\zeta = \infty$.
For simplicity we will use the notation $^f\sigma_t$ both for killed and non-killed subordinators when no confusion arises.
We are ready to prove the following convergences in distribution inspired by the L\'evy-Ito decomposition and usefull in order to understand the role of the L\'evy measure $\bar{\nu}$ and its tail $\nu(s)$.

\begin{prop}
\label{proposizrandomwalk}
Let $N(t)$, $t\geq 0$, be a homogeneous Poisson process with parameter $\theta = 1$ independent from the i.i.d. random variables $Y_j$ with distribution \eqref{leggedelley}. Let $f$ be the Bernstein function with representation \eqref{bernsteingenerica} Laplace exponent of the subordinator $^f\sigma (t)$, $t\geq 0$, and let $^fL(t)$, $t\geq 0$ be the inverse of $^f\sigma$ as in \eqref{definizinverso}. Let $\nu(s)$ be the tail of the L\'evy measure $\bar{\nu}$. The following convergences in distribution are true.
\begin{enumerate}
\item 
\begin{align}
 \l bt + \sum_{j=0}^{N \l t \, \nu(\gamma) \r} Y_j \r \, \stackrel{\textrm{law}}{\longrightarrow} \, ^f\sigma(t) \textrm{ as }\gamma \to 0,
\label{convsubord}
\end{align}
\item 
\begin{equation}
\inf \ll s>0 : bs + \sum_{j=0}^{N \l s \nu(\gamma) \r} Y_j > t \rr \stackrel{\textrm{law}}{\longrightarrow}  \, ^fL(t) \textrm{ as }\gamma \to 0.
\end{equation}
\end{enumerate}
\end{prop}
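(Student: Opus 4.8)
The plan is to prove (1) by the method of Laplace transforms and then to derive (2) from (1) through the elementary inverse relation for non-decreasing processes.

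For part (1), write $X_\gamma(t)=bt+\sum_j Y_j$ and note that the random sum is a compound Poisson random variable: conditioning on the Poisson count, which has mean $t\,\nu(\gamma)$, gives
\[
\mathbb{E}\,e^{-\lambda X_\gamma(t)}\;=\;e^{-\lambda bt}\,\exp\!\left(-\,t\,\nu(\gamma)\bigl(1-\mathbb{E}\,e^{-\lambda Y_1}\bigr)\right),\qquad \lambda>0 .
\]
Since $e^{-\lambda\cdot\infty}=0$ for $\lambda>0$, the atom $a\,\delta_\infty$ in \eqref{leggedelley} does not contribute to $\mathbb{E}\,e^{-\lambda Y_1}$, and using $\nu(\gamma)=a+\bar\nu(\gamma,\infty)$ one rewrites
\[
\nu(\gamma)\bigl(1-\mathbb{E}\,e^{-\lambda Y_1}\bigr)\;=\;a+\int_{(\gamma,\infty)}\bigl(1-e^{-\lambda y}\bigr)\,\bar\nu(dy).
\]
Because the integrand is nonnegative, monotone convergence gives, as $\gamma\downarrow0$,
\[
\int_{(\gamma,\infty)}\bigl(1-e^{-\lambda y}\bigr)\,\bar\nu(dy)\;\uparrow\;\int_{(0,\infty)}\bigl(1-e^{-\lambda y}\bigr)\,\bar\nu(dy)\;=\;f(\lambda)-a-b\lambda ,
\]
so the exponent of $X_\gamma(t)$ tends to $-t f(\lambda)$ and hence $\mathbb{E}\,e^{-\lambda X_\gamma(t)}\to e^{-tf(\lambda)}=\mathbb{E}\,e^{-\lambda\,{}^f\sigma(t)}$ for every $\lambda>0$, by \eqref{semigruppoconvoluz}. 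The claim then follows from the continuity theorem for Laplace transforms in the form that allows mass to escape to infinity, i.e.\ weak convergence on the compactified half-line $[0,\infty]$; this is the version needed because, when $a>0$, both the prelimit CTRW (through the $a\,\delta_\infty$ component of the $Y_j$) and the limiting killed subordinator carry an atom at $\infty$. If the sum is read as running from $j=0$, the extra factor $\mathbb{E}\,e^{-\lambda Y_0}=1-\nu(\gamma)^{-1}\bigl(a+\int_{(\gamma,\infty)}(1-e^{-\lambda y})\,\bar\nu(dy)\bigr)$ converges to $1$ since $\nu(\gamma)\to\infty$ under the standing hypothesis $\bar\nu(0,\infty)=\infty$, so the starting index is immaterial.

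For part (2), fix $t>0$ and regard $s\mapsto X_\gamma(s)=bs+\sum_j Y_j$ as a non-decreasing, right-continuous path with $X_\gamma(0)=0$, whose hitting time is $L_\gamma(t)=\inf\{s>0:X_\gamma(s)>t\}$. Monotonicity gives $\{L_\gamma(t)>s\}\subseteq\{X_\gamma(s)\le t\}$ surely, while right-continuity together with the fact that the (time-changed Poisson) jump times are a.s.\ isolated gives $\{X_\gamma(s)<t\}\subseteq\{L_\gamma(t)>s\}$ a.s., so that
\[
\Pr\{X_\gamma(s)<t\}\;\le\;\Pr\{L_\gamma(t)>s\}\;\le\;\Pr\{X_\gamma(s)\le t\}.
\]
Under the standing assumptions the law $\mu_s$ of ${}^f\sigma(s)$ is absolutely continuous on $[0,\infty)$ (Theorem~27.7 in \citet{satolevy}), hence its distribution function has no jump at $t$; combining this with part (1) applied at the fixed time $s$, both outer terms converge to $\mu_s[0,t)=\Pr\{{}^f\sigma(s)<t\}$ as $\gamma\downarrow0$. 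By the squeeze, $\Pr\{L_\gamma(t)>s\}\to\Pr\{{}^f\sigma(s)<t\}=\Pr\{{}^fL(t)>s\}$, the last identity being \eqref{probsubinv}. Since this holds for every $s\ge0$, the tail functions converge pointwise everywhere, which yields $L_\gamma(t)\stackrel{\textrm{law}}{\longrightarrow}{}^fL(t)$ as $\gamma\to0$.

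The computational heart — the Laplace exponent and the monotone passage $\gamma\downarrow0$ — is routine. The two delicate points are: (i) using the continuity theorem on the compactified half-line $[0,\infty]$ rather than on $[0,\infty)$, so that for killed subordinators ($a>0$) the atom at infinity is not lost; and (ii) in part (2), deriving the two inclusions of the sandwich from the path structure of $X_\gamma$ — monotonicity for one, right-continuity and isolation of the Poisson jump times for the other — and using that $t$ is a continuity point of the limiting law so that the two bounds share the same limit. I expect (i) to be the main subtlety, since it is exactly where killed subordinators enter and where a careless appeal to the ordinary continuity theorem on $[0,\infty)$ would fail.
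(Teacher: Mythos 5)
Your part (1) reproduces the paper's argument essentially verbatim: condition on the Poisson count to obtain the compound-Poisson Laplace transform, let the $\delta_\infty$ atom of \eqref{leggedelley} feed the killing rate $a$ into the exponent, and pass to the limit $\gamma\to 0$ in the truncated L\'evy integral. Your two added remarks -- that the limit is taken by monotone convergence, and that the continuity theorem must be applied on the compactified half-line $[0,\infty]$ so that the mass at infinity is not lost when $a>0$ -- are left implicit in the paper and are worth making explicit; the same goes for your observation that the (presumably typographical) starting index $j=0$ is harmless because $\nu(\gamma)\to\infty$. In part (2) you take a genuinely different route to the same conclusion. The paper writes $\Pr\{Z(t)>s\}=\Pr\{X_\gamma(s)<t\}$, takes the Laplace transform in $t$, reuses the computation of part (1) to obtain $\tfrac{1}{\lambda}e^{-sf(\lambda)}$ in the limit, and identifies this with \eqref{cheroba}; you instead sandwich $\Pr\{L_\gamma(t)>s\}$ between $\Pr\{X_\gamma(s)<t\}$ and $\Pr\{X_\gamma(s)\le t\}$ using monotonicity and right-continuity of the paths, and then apply part (1) at the fixed time $s$ together with the absolute continuity of $\mu_s$ to force both bounds to the common limit $\mu_s[0,t)=\Pr\{{}^fL(t)>s\}$ via \eqref{probsubinv}. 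Both arguments rest on the same inversion identity; yours avoids any appeal to Laplace-transform matching and is more careful about exactly where convergence of distribution functions holds, at the price of invoking the absolute-continuity hypothesis on $\mu_s$ (which the paper assumes throughout the surrounding results in any case, though it is not restated in this proposition). The proof is correct.
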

\begin{proof}
In order to prove (1) we consider the following Laplace transform
\begin{align}
& \mathbb{E}\exp\ll {-\lambda bt-\lambda \sum_{j=0}^{N \l t \, \nu(\gamma) \r}Y_j} \rr \notag \\
  = \, & e^{-\lambda bt} \mathbb{E} \left[  \mathbb{E} \l e^{-\lambda Y} \r^{N \l t\nu(\gamma) \r}   \right] \notag \\
= \, & \exp \ll {-\lambda bt} e^{- t \nu(\gamma) \l 1-\mathbb{E}e^{-\lambda Y} \r} \rr \notag \\
= \, & \exp\ll {-t \l b\lambda + \nu(\gamma) \int_\gamma^\infty \l 1-e^{-\lambda y} \r p_{Y_j}(dy) \r} \rr,
\label{68}
\end{align}
where $p_{Y_j}(dy)$ is the one in \eqref{leggedelley}. In the previous steps we used the independence of the random variables $Y_j$ and the fact that
\begin{equation}
\mathbb{E}e^{-\lambda N \l t \nu(\gamma) \r } \, = \, e^{- t \nu(\gamma) \l 1-e^{- \lambda} \r}.
\end{equation}
By performing the limit for $\gamma \to 0$ in \eqref{68} we obtain
\begin{align}
& \lim_{\gamma \to 0}  \mathbb{E}\exp \ll {-\lambda bt-\lambda \sum_{j=0}^{N \l t \, \nu(\gamma) \r}Y_j} \rr \notag \\
 = \, & \exp \ll {-t \l a+ b\lambda +  \int_0^\infty \l 1-e^{-\lambda y} \r   \bar{\nu}(dy)  \r} \rr \notag \\
 = \, & e^{-tf(\lambda)},
\end{align}
and this proves (1).

Now we prove (2). Let $Z(t) = \inf \ll s>0 : bs + \sum_{j=0}^{N \l s \nu(\gamma) \r} Y_j > t \rr$. By definition we have that
\begin{align}
\Pr \ll Z(t)  > s \rr \, = \, \Pr \ll bs+\sum_{j=0}^{N(s \, \nu(\gamma))} Y_j < t \rr 
\end{align}
and thus
\begin{align}
\mathcal{L} \left[ \Pr \ll Z(\puntomio)  > s \rr \right] (\lambda) \, = \, & \mathcal{L} \left[ \Pr \ll bs+\sum_{j=0}^{N(s \, \nu(\gamma))} Y_j < \puntomio \rr  \right] (\lambda).
\end{align}

By taking profit of calculation \eqref{68} we obtain
\begin{align}
& \mathcal{L} \left[ \Pr \ll Z(\puntomio) > s \rr \right] (\lambda) \, = \, \frac{1}{\lambda} \exp\ll {-s \l a+ b\lambda +  \int_0^\infty \l 1-e^{-\lambda y} \r   \bar{\nu}(dy)  \r} \rr
\end{align}
and by performing the limit for $\gamma \to 0$ we arrive at
\begin{align}
\lim_{\gamma \to 0} \mathcal{L} \left[ \Pr \ll Z(\puntomio) > s \rr \right] (\lambda) \, = \, \frac{1}{\lambda} e^{-sf(\lambda)}.
\label{324}
\end{align}
Since \eqref{324} coincides with \eqref{cheroba} the proof is complete.
\end{proof}
\begin{os}
For $f(x) = x^\alpha$, $\alpha \in (0,1)$ result \eqref{convsubord} becomes
\begin{equation}
\lim_{\gamma \to 0} \sum_{j=0}^{N \l t \frac{t^{-\alpha}}{\Gamma (1-\alpha)} \r} Y_j \, \stackrel{\textrm{ law }}{=} \, \sigma^\alpha (t),
\label{convergsubstabile}
\end{equation}
where $\sigma^\alpha (t)$, $t\geq 0$, is the stable subordinator of order $\alpha \in (0,1)$ and the i.i.d. random variables $Y_j$ have power-law distribution
\begin{align}
\Pr \ll Y \in dy \rr /dy \, = \, \alpha \gamma^\alpha \,  y^{-\alpha -1} 	\, \mathds{1}_{y>\gamma}, \qquad \gamma > 0,
\end{align}
which can be obtained from \eqref{leggedelley} by performing the substitutions
\begin{align}
\bar{\nu} (y) \, = \, \frac{ \alpha y^{-\alpha -1}}{\Gamma (1-\alpha)} dy, \qquad \textrm{and} \qquad \nu(\gamma) \, = \, \frac{\gamma^{-\alpha}}{\Gamma (1-\alpha)},
\end{align}
due to the fact that $f(x) = x^\alpha = \eqref{bernsteinxallaalfa}$ ($a=0$, $b=0$).
The result \eqref{convergsubstabile} is well-known (see, for example, \citet{meer12}) and represents the convergence in distribution of a CTRW with power-law distributed jumps to the stable subordinator.
\end{os}

\section{Densities and related governing equations}
In this section we present in a unifying framework the governing equations of the densities of subordinators and their inverses, by making use of the operators defined in Section \ref{sezionederivate}.
\begin{te}
\label{teoremadensitasubeinv}
Let $^f\sigma(t)$, $t\geq 0$, and $^fL(t)$, $t\geq 0$, be respectively a subordinator and its inverse. Let $\bar{\nu}$ be the L\'evy measure such that $\bar{\nu}(0, \infty)=\infty$ and let $\nu(s) = a+ \bar{\nu}(s, \infty)$. Assume $s \to \nu(s)$ is absolutely continuous on $(0, \infty)$.
\begin{enumerate}
\item The probability density $\mu_t(x)$ of the subordinator $^f\sigma$ is the solution to the problem
 \begin{equation}
 \begin{cases}
 \frac{\partial}{\partial t} \mu_t(x) \, = \, - \, ^f\mathcal{D}_x^{(bt, +\infty)} \mu_t(x), \qquad & x > bt, 0<t< \infty, b \geq 0, \\
 \mu_t(bt) \, = \, 0, \qquad & 0< t < \infty, \\
 \mu_0(x) \, = \, \delta(x),
 \label{kernelsubord}
 \end{cases}
\end{equation}
\item The probability density $l_t(x)$ of $^fL(t)$, $t\geq 0$, is the solution to the equation
\begin{equation}
^f\mathcal{D}_t^{(0, \infty)} l_t(x) \, = \, - \frac{\partial}{\partial x} l_t(x), \qquad t>0, \textrm{ and } \begin{cases} 0<x<\frac{t}{b} < \infty, \quad &\textrm{if }b>0,  \\ 0<x<\infty, &\textrm{if }b=0, \end{cases}
\label{kernelinverso}
\end{equation}
subject to
\begin{align}
\begin{cases}
l_t (t/b) = 0, \\
l_t(0)= \nu(t), \\
l_0(x) = \delta(x).
\end{cases}
\label{condizioniinverso}
\end{align}
\end{enumerate}
The operator $^f\mathcal{D}_x^{(bt, +\infty)}$ is the one of Definition \ref{definizionederivata}.
\end{te}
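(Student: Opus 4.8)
The plan is to reduce each system to an algebraic identity by Laplace transformation --- in the spatial variable for part (1) and in the temporal variable for part (2) --- and then appeal to injectivity of the Laplace transform. All the ingredients are already available: $\mathcal{L}\!\left[\mu_t\right](\lambda)=e^{-tf(\lambda)}$ from \eqref{semigruppoconvoluz}, the transform $\mathcal{L}\!\left[l_{\puntomio}(x)\right](\lambda)=\frac{f(\lambda)}{\lambda}e^{-xf(\lambda)}$ from Proposition \ref{lemmasullinverso}(1), and the transform rule $\mathcal{L}\!\left[{}^f\mathcal{D}_t^{(c,+\infty)}u\right](\lambda)=f(\lambda)\widetilde u(\lambda)-be^{-\lambda c}u(c)$ from Lemma \ref{laplacederivata}. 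The boundary and initial data are read off from elementary path properties of $^f\sigma$ and $^fL$ together with parts (1)--(3) of Proposition \ref{lemmasullinverso}.

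For part (1): since $\bar\nu(0,\infty)=\infty$, the laws $\mu_t$ are absolutely continuous (Theorem 27.7 of \cite{satolevy}), and when $b>0$ the drift forces $^f\sigma(t)\ge bt$, so the density $\mu_t(\cdot)$ is carried by $[bt,\infty)$, with $\mu_t(bt)=0$ (discussed below); the identity $^f\sigma(0)=0$ gives $\mu_0=\delta$. Applying $\int_0^\infty e^{-\lambda x}(\cdot)\,dx$ to \eqref{kernelsubord}, the left-hand side becomes $\partial_t e^{-tf(\lambda)}=-f(\lambda)e^{-tf(\lambda)}$, the differentiation under the integral being legitimate by the smoothness of $\mu$; by Lemma \ref{laplacederivata} with $c=bt$ applied to $x\mapsto\mu_t(x)$, the right-hand side becomes $-\bigl(f(\lambda)e^{-tf(\lambda)}-be^{-\lambda bt}\mu_t(bt)\bigr)$, which equals $-f(\lambda)e^{-tf(\lambda)}$ since $\mu_t(bt)=0$. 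The two $x$-Laplace transforms coincide for every $t$, so the equation holds by injectivity.

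For part (2): fix $x>0$ and view $t\mapsto l_t(x)$. Proposition \ref{lemmasullinverso}(1) gives $\int_0^\infty e^{-\lambda t}l_t(x)\,dt=\frac{f(\lambda)}{\lambda}e^{-xf(\lambda)}$; differentiating in $x$ under the integral, $-\int_0^\infty e^{-\lambda t}\partial_x l_t(x)\,dt=\frac{f(\lambda)^2}{\lambda}e^{-xf(\lambda)}$. Since $^fL(0)=0$ one has $l_0(x)=0$ for $x>0$, so Lemma \ref{laplacederivata} with $c=0$ gives $\mathcal{L}\!\left[{}^f\mathcal{D}_t l_t(x)\right](\lambda)=f(\lambda)\cdot\frac{f(\lambda)}{\lambda}e^{-xf(\lambda)}=\frac{f(\lambda)^2}{\lambda}e^{-xf(\lambda)}$, matching the previous display; injectivity yields \eqref{kernelinverso}. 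The side conditions \eqref{condizioniinverso} then follow at once: $l_t(0)=\nu(t)$ is Proposition \ref{lemmasullinverso}(3), $l_0(x)=\delta(x)$ restates $^fL(0)=0$, and when $b>0$ the bound $^fL(t)\le t/b$ (again a consequence of the drift) confines the law of $^fL(t)$ to $[0,t/b]$, forcing $l_t(t/b)=0$.

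I expect the genuinely delicate point in both parts to be the vanishing of the density at the boundary of its support: $\mu_t(bt)=0$ in (1) and, through the representation $l_t(s)=b\mu_s(t)+(\nu*\mu_s)(t)$ of Proposition \ref{lemmasullinverso}(1), also $l_t(t/b)=0$ in (2), both of which reduce to $\mu^{0}_s(0)=0$ for the driftless component $^f\sigma^{0}$. This needs more than the mere absence of an atom at the endpoint; one must use that for an infinite-activity subordinator the density of the driftless part is continuous up to and vanishes at the origin. The remaining technicalities --- interchanging $\partial_t$ (resp.\ $\partial_x$) with the Laplace integral, and verifying that $x\mapsto\mu_t(x)$ and $t\mapsto l_t(x)$ satisfy the absolute-continuity and exponential-growth hypotheses of Lemma \ref{laplacederivata} on the relevant half-lines --- are routine bookkeeping.
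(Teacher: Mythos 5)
Your proof is correct and follows essentially the same route as the paper: both arguments verify the claimed densities against the known transforms $\mathcal{L}[\mu_t](\phi)=e^{-tf(\phi)}$ and $\mathcal{L}[l_{\puntomio}(x)](\lambda)=\frac{f(\lambda)}{\lambda}e^{-xf(\lambda)}$ by means of Lemma \ref{laplacederivata}, with the support facts coming from the drift and Proposition \ref{lemmasullinverso}. The only difference is cosmetic --- in part (2) you transform in $t$ for fixed $x$, whereas the paper transforms first in $x$ (which is where $l_t(0)=\nu(t)$ and $l_t(t/b)=0$ enter the computation) and then in $t$ to recover the double transform $\frac{f(\lambda)/\lambda}{\phi+f(\lambda)}$ --- and your explicit flagging of the boundary vanishing $\mu_t(bt)=0$ as the delicate point is, if anything, more careful than the paper's support argument.
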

\begin{proof}
As already pointed out the conditions assumed on $\bar{\nu}$ and $\nu(s)$ ensure that $\mu_t(B)$ and $l_t(B)$ are absolutely continuous and therefore have densities we denote again by $\mu_t(x)$ and $l_t(x)$.
\begin{enumerate}
\item First we note that $\mu_t(x) =0 $ for $x \leq bt$, $b \geq 0$, indeed from Proposition \ref{proposizrandomwalk}
\begin{align}
 \Pr \ll \, ^f\sigma(t) > bt  \rr \, = \, & \lim_{\gamma \to 0} \Pr \ll   bt +  \sum_{j=0}^{N \l t \nu (\gamma) \r } Y_j  > bt \rr \notag \\
 = \, & \lim_{\gamma \to 0} \Pr \ll \sum_{j=0}^{N \l t \nu (\gamma) \r } Y_j > 0 \rr \, = \, 1.
\end{align}
The Laplace transform of $\mu_t(x)$ is $\mathcal{L} \left[ \mu_t(\puntomio) \right] (\phi) \, = \, e^{-t f (\phi)}$ and therefore $\mathcal{L} \left[ \widetilde{\mu}_{\puntomio}(\phi) \right] (\lambda) = 1/(f(\lambda) + \phi)$. In view of Lemma \ref{laplacederivata} the Laplace transform of \eqref{kernelsubord} with respect to $x$ is
\begin{equation}
\frac{\partial}{\partial t} \widetilde{\mu}_t (\phi) \, = \, -f(\phi) \widetilde{\mu}_t(\phi) + b e^{-bt\phi} \mu_t(bt)
\label{laplaceb0} 
\end{equation}
and therefore by performing the Laplace transform with respect to $t$ we obtain
\begin{align}
\widetilde{\widetilde{\mu}}_\lambda (\phi) \, = \, \frac{1}{f(\lambda) + \phi}
\end{align}
where we used the facts that $\widetilde{\mu}_0(\phi) = 1$ and $\mu_t(bt)=0$. This completes the proof of (1).
\item First we show that $l_t(x) = 0$ for $x \geq \frac{t}{b}$ when $b>0$. By considering Proposition \ref{proposizrandomwalk} we have
\begin{align}
\Pr \ll \, ^fL(t) < \frac{t}{b} \rr \, = \, &  \Pr \ll ^f\sigma \l \frac{t}{b} \r > t \rr \notag \\
= \, & \lim_{\gamma \to 0} \Pr \ll t + \sum_{j=0}^{N \l \frac{t}{b} \nu (\gamma) \r} Y_j > t \rr \, = \, 1.
\end{align}
The function $t \to l_t(x)$ is differentiable since from Proposition \ref{lemmasullinverso} we have that $l_t(x) = b\mu_x(t)+\int_0^t\mu_x(t-z)\nu(z)dz$ and $t \to \mu_x(t)$ is differentiable in view of Theorem 28.1 of \cite{satolevy}.

The double Laplace transform of $l_t(x)$ reads
\begin{align}
\mathcal{L} \left[ \mathcal{L} \left[ l_t(x) \right] (\phi) \right](\lambda) \, = \, \frac{f(\lambda)/\lambda}{\phi + f(\lambda)},
\end{align}
where we used Proposition \ref{lemmasullinverso}.
From this point we temporary assume that $b>0$. We consider the Laplace transform with respect to $x$ of \eqref{kernelinverso} and we obtain
\begin{align}
^f\mathcal{D}_t^{(0, \infty)} \widetilde{l}_t(\phi) \, = \, -\phi \widetilde{l}_t(\phi) + l_t(0) - e^{-\phi (t/b)} l_t(t/b).
\label{laplaceprimavolta}
\end{align}
Considering the Laplace transform with respect to $t$ of \eqref{laplaceprimavolta} and by taking into account \eqref{condizioniinverso} we get
\begin{align}
f(\lambda) \widetilde{\widetilde{l}}_\lambda (\phi) - b \widetilde{l}_0 (\phi) \, = \, -\phi \widetilde{\widetilde{l}}_\lambda (\phi) + \frac{f(\lambda)}{\lambda} - b
\end{align}
where we used the fact that
\begin{align}
\int_0^\infty e^{-\lambda t} \nu(t)dt \, = \, \frac{f(\lambda)}{\lambda} - b
\end{align}
and Lemma \ref{laplacederivata}. The conditions \eqref{condizioniinverso} imply $\widetilde{l}_0(\phi) = 1$ and thus
\begin{equation}
\widetilde{\widetilde{l}}_\lambda (\phi) \, = \, \frac{f(\lambda)/\lambda}{\phi + f(\lambda)}.
\end{equation}
The proof for $b=0$ can be carried out equivalently.

\end{enumerate}
\end{proof}

\subsection{Some remarks on the long-range correlation}
The operators $^f\mathfrak{D}_t$ and $^f\mathcal{D}^{(c, \infty)}_t$ are non-local and govern processes with different memory properties. The presence of long-range correlation can be detected in several ways (see for example \citet{samoro}). Here we will explore the rate by which the correlation of the inverses of subordinators decays (a similar approach can be found in \citet{leone} applied to a fractional Pearson diffusion). In \citet{momentinv} the  authors derive an explicit formula for the moments of the inverse processes of subordinators. Define
\begin{align}
& \mathbb{E} \left[ \, ^fL(t_1)^{m_1} \, \cdots \, ^fL(t_n)^{m_n} \right] \, = \, U \l t_1, \dots, t_n; m_1, \dots, m_n \r.
\label{menomale}
\end{align}
Formula \eqref{menomale} obeys the recursion formula
\begin{align}
&U \l t_1, \dots, t_n; m_1, \dots, m_n \r \notag \\
= \, & \int_0^{t_{\min}} \sum_{i=1}^n m_i U \l t_1 - \tau, \dots, t_n - \tau, m_1, \dots, m_{i-1}, m_i -1, m_{i+1},\dots, m_n \r U(d\tau)
 \label{381}
\end{align}
where $t_{\min} = \min (t_1, \cdots, t_n)$. If $n=1$ and $m_1 =1$ the function \eqref{menomale} reduces to
\begin{equation}
U(x) = \mathbb{E} \left[ \, ^fL(x) \right] = \mathbb{E}\left[ \int_0^\infty \mathds{1}_{\ll \, ^f\sigma (t) \leq x \rr} dt \right]
\end{equation}
and is known as the renewal function since it is the distribution function of the renewal measure $U(dx)$. The  renewal measure is the potential measure of a subordinator and it is given by
\begin{equation}
U \l B \r \, = \, \mathbb{E} \int_0^\infty \mathds{1}_{[ \, ^f\sigma (t) \in B]} \, dt \, = \, \int_0^\infty \mu_t(B) \, dt, \qquad \textrm{for } B \subseteq [0, \infty),
\end{equation}
the reader can consults \citet{vondra} for further information.
We recall the renewal function is subadditive that is
\begin{align}
U(x+y) \leq U(x) + U(y), \qquad \forall x, y, \geq 0
\label{subadditive}
\end{align}
and that
\begin{equation}
\int_0^\infty e^{-\lambda x} U(dx) \, = \, \frac{1}{f(\lambda)} \qquad \int_0^\infty e^{-\lambda x} U(x) \, dx \, = \, \frac{1}{\lambda f(\lambda)}.
\end{equation}
Furthermore it is well-known (see, for example, \cite{bertoins}, Proposition 1.4) that there exist positive constants $c$ and $c^\prime$ such that
\begin{align}
c \, U(x) \leq \frac{1}{f\l \frac{1}{x} \r} \leq c^\prime \, U(x).
\label{bound}
\end{align}
By applying \eqref{381} we write
\begin{equation}
\mathbb{E} \, ^fL(s) \, ^fL(t)  \, = \, \int_0^{s \wedge t} \l U(s-\tau) + U(t-\tau)  \r U(d\tau)
\end{equation}
which can be interpreted as a long-range dependency property. We can write for $w>0$, 
\begin{align}
 \mathbb{E} \l \, ^fL(t) \, ^fL(t+s) \r  \, = \,  & \int_0^{t \wedge (t+s)} \l U(t-\tau) + U(t+s-\tau)  \r U(d\tau)  \notag \\
\geq \, &  \int_0^{t} U(s+2t-2\tau) U(d\tau) \notag \\
\geq \, &  \int_0^t \frac{1}{c^\prime f \l \frac{1}{s+2t-2\tau} \r} U(d\tau)
\label{centoedue}
\end{align}
where we applied \eqref{subadditive} and \eqref{bound}.
We recall that $1/f$ is monotone and thus we can write
\begin{align}
\lim_{s \to \infty} \int_0^t \frac{1}{c^\prime f \l \frac{1}{s+2t-2\tau} \r} U(d\tau) \, = \, \int_0^t \lim_{s \to \infty} \frac{1}{c^\prime f \l \frac{1}{s+2t-2\tau} \r} U(d\tau) >0
\label{finalmentesperiamo}
\end{align}
since $\lim_{z\to 0}f(z) \geq 0$. Fix $w,t>0$ and use formula \eqref{finalmentesperiamo}, we have
\begin{align}
\int_w^\infty \mathbb{E} \, ^fL(t) \, ^fL(t+s) \, ds \, = \, +\infty, \qquad \forall w,t>0.
\end{align}

\section{On the governing equations of time-changed $C_0$-semigroups}
In this section we discuss the concept of time-changed $C_0$-semigroups on a Banach space $(\mathfrak{B}, \left\| \puntomio \right\|_{\mathfrak{B}} )$ (see more on semigroup theory in \citet{autorivari, jacob1}) which  we define as the Bochner integral
\begin{equation}
\mathcal{T}_tu \, = \, \int_0^\infty T_su \, l_t(ds)
\end{equation}
where $T_s$ is a $C_0$-semigroup and $l_t$ is the distribution of the inverse $^fL(t)$, $t\geq 0$ of $^f\sigma (t)$, $t\geq 0$. We recall that a $C_0$-semigroup of operators on $\mathfrak{B}$ is a family of linear operators $T_t$ (bounded and linear) which maps $\mathfrak{B}$ into itself and is strongly continuous that is
\begin{align}
\lim_{t \to 0} \left\| T_tu - u \right\|_{\mathfrak{B}} \, = \, 0, \qquad  \forall u \in {\mathfrak{B}}.
\end{align}
In other words a bounded linear operator $T_t$ acting on a function $u \in {\mathfrak{B}}$ is said to be a $C_0$-semigroup if, $\forall u \in \mathfrak{B}$,
\begin{enumerate}
\item[$\bullet$]  $T_0u = u$ (is the identity operator),
\item[$\bullet$] $T_tT_su = T_sT_tu = T_{t+s}u$, $\forall s, t \geq 0$,
\item[$\bullet$] $\lim_{t \to 0} \left\| T_tu -u \right\|_{\mathfrak{B}} = 0$.
\end{enumerate}
The infinitesimal generator of a $C_0$-semigroup is the operator
\begin{align}
Au \, :=  \, \lim_{t \to 0} \frac{T_tu-u}{t},
\end{align}
for which
\begin{align}
\textrm{Dom} \l A \r := \ll u \in \mathfrak{B}: \lim_{t \to 0} \frac{T_tu-u}{t} \textrm{ exists as strong limit} \rr.
\end{align}
The aim of this section is to write the initial value problem associated with $\mathcal{T}_t$ by making use of the convolution-type time-derivatives of Definition \ref{definizaltern}.

\begin{te}
\label{teoremagoverninvsubord}
Let $^fL(t)$, $t\geq 0$, be the inverse process of a subordinator with Laplace exponent $f$ and let $l_t$ be the distribution of $^fL$. Let $\bar{\nu}(0, \infty) = \infty$ and $s \to \nu(s)= a+ \bar{\nu}(s, \infty)$ be absolutely continuous on $(0, \infty)$. Let $T_tu$, $u \in \mathfrak{B}$, be a (strongly continuous) $C_0$-semigroup on the Banach space $\l \mathfrak{B}, \left\| \puntomio \right\|_{\mathfrak{B}} \r$ such that $\left\| T_tu \right\|_{\mathfrak{B}} \leq \left\| u \right\|_\mathfrak{B}$. Let $\l A, \textrm{Dom}\l A \r \r$ be the generator of $T_tu$. The operator defined by the Bochner integral
\begin{equation}
\mathcal{T}_t u \, = \, \int_0^\infty T_su \, l_t(ds)
\label{312}
\end{equation}
acting on a function $u \in \mathfrak{B}$ is such that
\begin{enumerate}
\item  $\mathcal{T}_tu$ is a uniformly bounded linear operator on $\mathfrak{B}$,
\item $\mathcal{T}_tu$ is strongly continuous $\forall u \in  \mathfrak{B} $,
\item $\mathcal{T}_tu$ solves the problem
\begin{align}
\begin{cases}
^f\mathfrak{D}_t q(t) \, = \, A q(t), \qquad 0<t<\infty, \\
q(0) \, = \, u \in \textrm{Dom}\l A \r 
\end{cases}
\label{problemaimportante}
\end{align}
where the time-operator $^f\mathfrak{D}_t$ is the one appearing in Definition \ref{definizaltern}.
\end{enumerate}
\end{te}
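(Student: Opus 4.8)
The plan is to obtain (1) and (2) from the facts that each $l_t$ is a probability measure and each $T_s$ a contraction, and to obtain (3) by Laplace transforming the candidate equation in $t$ and reducing it to the resolvent identity for $A$.

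\textbf{Items (1) and (2).} Since $s\mapsto T_su$ is strongly continuous it is strongly measurable, $\|T_su\|_{\mathfrak{B}}\le\|u\|_{\mathfrak{B}}$, and $l_t$ is a probability measure on $[0,\infty)$ by Proposition~\ref{lemmasullinverso}(4); hence the Bochner integral \eqref{312} is well defined and
\[
\|\mathcal{T}_tu\|_{\mathfrak{B}}\,\le\,\int_0^\infty\|T_su\|_{\mathfrak{B}}\,l_t(ds)\,\le\,\|u\|_{\mathfrak{B}},
\]
while linearity is inherited from $T_s$ and from the integral. This gives (1) and $\sup_{t\ge0}\|\mathcal{T}_t\|\le1$. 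For (2), first $\mathcal{T}_0u=T_0u=u$ because $l_0=\delta_0$. For $u\in\textrm{Dom}(A)$ one combines $T_su-u=\int_0^sT_rAu\,dr$, the relation $l_t(r,\infty)=\int_0^t\mu_r(w)\,dw$ (which holds since $^fL(t)>r$ iff $^f\sigma(r)<t$, and $\mu_r$ is absolutely continuous under the standing hypotheses), and Fubini to obtain
\[
\mathcal{T}_tu-u\,=\,\int_0^\infty(T_su-u)\,l_t(ds)\,=\,\int_0^\infty T_rAu\,l_t(r,\infty)\,dr\,=\,\int_0^t\!\left(\int_0^\infty T_rAu\,\mu_r(w)\,dr\right)dw\,=:\,\int_0^tg(w)\,dw,
\]
where $\int_0^T\|g(w)\|_{\mathfrak{B}}\,dw\le\|Au\|_{\mathfrak{B}}\int_0^\infty\mu_r[0,T)\,dr=\|Au\|_{\mathfrak{B}}\,\mathbb{E}\,^fL(T)<\infty$ (the renewal function is finite). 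Thus $t\mapsto\mathcal{T}_tu$ is absolutely continuous on compacts, a fortiori continuous, for $u\in\textrm{Dom}(A)$, with $\tfrac{d}{dt}\mathcal{T}_tu=g(t)$ for a.e.\ $t$; since $\textrm{Dom}(A)$ is dense in $\mathfrak{B}$ and $\sup_t\|\mathcal{T}_t\|\le1$, an $\varepsilon/3$ argument extends strong continuity of $t\mapsto\mathcal{T}_tu$ to every $u\in\mathfrak{B}$.

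\textbf{Item (3).} Fix $u\in\textrm{Dom}(A)$ and set $q(t)=\mathcal{T}_tu$. Because $A$ is closed and $AT_su=T_sAu$ is bounded and continuous in $s$, $A$ can be pulled through the Bochner integral, so $q(t)\in\textrm{Dom}(A)$ and $Aq(t)=\mathcal{T}_t(Au)$, which by (2) is continuous and bounded by $\|Au\|_{\mathfrak{B}}$. Laplace transforming $q$ in $t$ — using Fubini (legitimate because $\|T_su\|_{\mathfrak{B}}\le\|u\|_{\mathfrak{B}}$ and, by Proposition~\ref{lemmasullinverso}(1), $\int_0^\infty\!\int_0^\infty e^{-\lambda t}l_t(s)\,ds\,dt=1/\lambda$), the identity $\int_0^\infty e^{-\lambda t}l_t(s)\,dt=\tfrac{f(\lambda)}{\lambda}e^{-sf(\lambda)}$, and the resolvent representation of the contraction semigroup $T_s$ (valid since $f(\lambda)>0$ for $\lambda>0$) — one finds
\[
\widetilde{q}(\lambda)\,=\,\frac{f(\lambda)}{\lambda}\int_0^\infty e^{-sf(\lambda)}T_su\,ds\,=\,\frac{f(\lambda)}{\lambda}\bigl(f(\lambda)I-A\bigr)^{-1}u,
\]
and, in the same way, $\mathcal{L}[Aq](\lambda)=\tfrac{f(\lambda)}{\lambda}(f(\lambda)I-A)^{-1}Au$. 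By the absolute continuity established above and $q(0)=u$, Lemma~\ref{lemmaplacealtern} applies and gives $\mathcal{L}[\,^f\mathfrak{D}_tq](\lambda)=f(\lambda)\widetilde{q}(\lambda)-\tfrac{f(\lambda)}{\lambda}u$; inserting $\widetilde{q}(\lambda)$ and using $f(\lambda)(f(\lambda)I-A)^{-1}-I=A(f(\lambda)I-A)^{-1}$ together with the commutation of $A$ and its resolvent,
\[
\mathcal{L}\bigl[\,^f\mathfrak{D}_tq\bigr](\lambda)\,=\,\frac{f(\lambda)}{\lambda}\Bigl(f(\lambda)(f(\lambda)I-A)^{-1}-I\Bigr)u\,=\,\frac{f(\lambda)}{\lambda}(f(\lambda)I-A)^{-1}Au\,=\,\mathcal{L}[Aq](\lambda).
\]
Both $^f\mathfrak{D}_tq$ and $Aq=\mathcal{T}_{\cdot}(Au)$ are locally integrable and exponentially bounded, so uniqueness of the vector-valued Laplace transform forces $^f\mathfrak{D}_tq(t)=Aq(t)$ for $t>0$; with $q(0)=\mathcal{T}_0u=u$ this is precisely \eqref{problemaimportante}.

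\textbf{Expected main obstacle.} The estimates in (1)--(2) and the Laplace-domain algebra are routine. The point that needs care is the return to the time domain: one must check that $t\mapsto\mathcal{T}_tu$ is regular enough (absolutely continuous) for $^f\mathfrak{D}_t$ to be applicable — which is exactly where the Fubini rearrangement and the finiteness of $\mathbb{E}\,^fL(T)$ enter — and one must justify the several interchanges between $A$, the Bochner integral and the Laplace integral, each of which rests on the contraction bound $\|T_s\|\le1$ and the closedness of $A$.
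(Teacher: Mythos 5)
Your proof is correct and rests on the same core identity as the paper's, namely $\int_0^\infty e^{-\lambda t}\mathcal{T}_t\,dt=\tfrac{f(\lambda)}{\lambda}R_{f(\lambda),A}$ combined with Lemma \ref{lemmaplacealtern} and uniqueness of the Laplace transform; item (1) is argued identically. The differences are in the supporting steps. For (2) the paper goes directly via $\|\mathcal{T}_hu-u\|_{\mathfrak{B}}\le\int_0^\infty\|T_su-u\|_{\mathfrak{B}}\,l_h(ds)\to0$ using $l_h\to\delta_0$, whereas you first prove the stronger statement that $t\mapsto\mathcal{T}_tu$ is absolutely continuous for $u\in\mathrm{Dom}(A)$, writing $\mathcal{T}_tu-u=\int_0^t g(w)\,dw$ with $g(w)=\int_0^\infty T_rAu\,\mu_r(w)\,dr$ and controlling it by the renewal function $\mathbb{E}\,{}^fL(T)<\infty$, then extend by density. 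This extra work pays off in (3): it is exactly what licenses applying $^f\mathfrak{D}_t$ to $q(t)=\mathcal{T}_tu$ and invoking Lemma \ref{lemmaplacealtern} in its vector-valued form, a regularity point the paper handles more informally by interchanging $\lim_{h\to0}$ of difference quotients with the Laplace and convolution integrals without explicit justification. Conversely, the paper spells out via the operators $A_h$ that $^fR_\lambda u\in\mathrm{Dom}(A)$ and that $(f(\lambda)-A)\,{}^fR_\lambda u={}^fR_\lambda(f(\lambda)-A)u=\tfrac{f(\lambda)}{\lambda}u$, which you obtain more compactly from closedness of $A$ and the resolvent algebra $f(\lambda)(f(\lambda)I-A)^{-1}-I=A(f(\lambda)I-A)^{-1}$; both routes are sound. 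The one residual gap common to both arguments is that uniqueness of the Laplace transform yields $^f\mathfrak{D}_tq(t)=Aq(t)$ only for a.e.\ $t$ unless one also checks continuity of $t\mapsto{}^f\mathfrak{D}_tq(t)$, but your write-up is no less rigorous than the paper's on this point.
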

\begin{proof}
Now we prove the Theorem for $b>0$ which is the case requiring some additional attention. The proof for $b=0$ can be carried out equivalently and therefore is a particular case. 
\begin{enumerate}
\item At first we show that the operator $\mathcal{T}_t u$ is uniformly bounded on $\l \mathfrak{B}, \left\| \puntomio \right\|_{\mathfrak{B}} \r$. From the hypothesys we have
\begin{equation}
\left\| T_t \right\| \leq 1, \qquad t \geq 0,
\label{esistecostante}
\end{equation}
In view of \eqref{esistecostante} we can write
\begin{align}
\left\| \mathcal{T}_t u \right\|_{\mathfrak{B}} \,  = \, & \left\| \int_0^\infty T_s u \, l_t (ds) \right\|_{\mathfrak{B}} \notag \\
\leq \, &  \int_0^\infty \left\| T_su  \right\|_{\mathfrak{B}} \, l_t(ds) \, \leq \,  \left\| u \right\|_{\mathfrak{B}},
\label{revision}
\end{align}
since $l_t[0, \infty) = 1$, $\forall t \geq 0$, as showed in Proposition \ref{lemmasullinverso}.
\item The strong continuity follows from the fact that
\begin{align}
\lim_{h \to 0} \left\| \mathcal{T}_{h}u - u  \right\|_{\mathfrak{B}} \, = \, & \left\| \int_0^\infty T_su \, l_h(ds) - u \right\|_{\mathfrak{B}} \notag \\
 \leq \, & \int_0^\infty \left\| T_su - u \right\|_{\mathfrak{B}} l_h(ds) \stackrel{h \to 0}{\longrightarrow} 0,
\end{align}
since $l_h \to \delta_0$ as $h \to 0$ and $T_s$ is strongly continuous.
\item Since $T_t$ is a $C_0$-semigroup generated by $\l A, \textrm{Dom}  A  \r$ we have
\begin{equation}
\frac{d}{d t} T_tu \, = \, A T_tu \, = \, T_t Au, \qquad \forall u \in \textrm{Dom} \l A \r.
\end{equation}
Now let
\begin{align}
A_s \, = \, \frac{T_su - u}{s}.
\end{align}
We note that
\begin{align}
A_s \mathcal{T}_tu \, = \, & A_s \int_0^\infty T_zu \, l_t(dz)  \notag \\
= \, & \int_0^\infty \frac{T_{z+s}u-T_zu}{s} \, l_t(dz) \notag \\
= \, & \int_0^\infty T_z \l \frac{T_su - u}{s} \r \, l_t(dz)
\end{align}
and since for $u \in \textrm{Dom}\l  A \r$ the limit for $s \to 0$ on the right-hand side exists we have that $\mathcal{T}_t$ maps $\textrm{Dom}\l A \r$ into itself.

By using Lemma \ref{lemmaplacealtern} we note that the Laplace transform of \eqref{problemaimportante} becomes
\begin{equation}
\begin{cases}
f(\lambda) \widetilde{q}(\lambda) - \frac{f(\lambda)}{\lambda} q( 0) \, = \, A \widetilde{q}( \lambda) \\
q(0) \, = \, u.
\end{cases}
\label{laplace(2)}
\end{equation}
Now define the operator
\begin{align}
^fR_{\lambda, A} \, : = \, \int_0^\infty e^{-\lambda t} \mathcal{T}_t dt \, = \, \frac{f(\lambda)}{\lambda} R_{f(\lambda), A}
\end{align}
where
\begin{equation}
R_{f(\lambda), A} = \int_0^\infty e^{-tf(\lambda)} T_t dt.
\label{aggia}
\end{equation}
We recall that since we assume $\l A, \textrm{Dom}\l A \r \r$ generate a $C_0$-semigroup for which $\left\| T_tu \right\|_\mathfrak{B} \leq \left\| u \right\|_\mathfrak{B}$, we necessarily have that $A$ is closed and densely defined. Furthermore for all $\lambda \in \mathbb{C}$ with $\Re \lambda >0$ we must have that $\lambda \in \rho (A)$ and $\left\| R_{\lambda, A} \right\| \leq \frac{1}{\Re \lambda}$, where
\begin{equation}
R_{\lambda, A} \, = \, \int_0^\infty e^{-\lambda t} T_t \, dt
\end{equation}
is the resolvent operator and $\rho(A)$ is the resolvent set of $A$. The integral \eqref{aggia} is justified since every Bernstein function has an extension onto the right complex half-plane $\mathbb{H}= \ll \lambda \in \mathbb{C} : \Re \lambda > 0 \rr$ which satisfies (see \cite{librobern}, Proposition 3.5)
\begin{align}
\Re f(\lambda) \, = \, a + b \Re \lambda  + \int_0^\infty \l 1-e^{-s \Re \lambda	} \cos \Im \lambda \r \bar{\nu}(ds) \, > \, 0.
\end{align}
By computing we can evaluate the following Laplace transform
\begin{align}
& \int_0^\infty e^{-\lambda t} \, ^f\mathfrak{D}_t \mathcal{T}_t u \, dt  \, = \notag \\ 
= \, &  \left[ b \int_0^\infty e^{-\lambda t}  \lim_{h \to 0} \frac{\mathcal{T}_{t+h}-\mathcal{T}_t}{h}  u \, dt +  \int_0^\infty e^{-\lambda t} \int_0^t \lim_{h \to 0} \frac{\mathcal{T}_{t+h-s}u-\mathcal{T}_{t-s}u}{h}   \, \nu(s) ds \, dt \right] \notag \\
= \, &  \left[ \lim_{h \to 0} b\frac{e^{\lambda h}}{h} \int_h^\infty e^{-\lambda t} \mathcal{T}_tu \, dt -b \lim_{h \to 0} \frac{1}{h} \int_0^\infty e^{-\lambda t} \mathcal{T}_tu \, dt \right. \notag \\
& \left. +  \int_0^\infty ds \, \nu(s) \int_s^\infty e^{-\lambda t} \lim_{h \to 0} \frac{\mathcal{T}_{t+h-s}u - \mathcal{T}_{t-s}u}{h} \right] \notag \\
= \, & \left[ b\lim_{h \to 0}\frac{e^{\lambda h }-1}{h} \, ^fR_\lambda u \, - b \lim_{h \to 0} \frac{e^{\lambda h}}{h} \int_0^h e^{-\lambda t} \mathcal{T}_tu \, dt \, \right. \notag \\
& \left. +  \l \frac{f(\lambda)}{\lambda} -b \r \l \lim_{h \to 0} \frac{e^{\lambda h}-1}{h}  \, ^fR_\lambda u - \lim_{h \to 0} \frac{e^{\lambda h}}{h} \int_0^h e^{-\lambda t} \mathcal{T}_tu \, dt \r \right] \notag \\
= \, &  \left[  \frac{f(\lambda)}{\lambda} \l \lim_{h \to 0} \frac{e^{\lambda h}-1}{h}  \, ^fR_\lambda u - \lim_{h \to 0} \frac{e^{\lambda h}}{h} \int_0^h e^{-\lambda t} \mathcal{T}_tu \, dt  \r \right] \notag \\
= \, & f(\lambda) \, ^fR_\lambda u - \frac{f(\lambda)}{\lambda}u,
\end{align}
where in the third step we used \eqref{gimisura}.

With this in hand we note that $^fR_{\lambda, A}$ satisfies
\begin{align}
\left\| \, ^fR_\lambda \right\| \, \leq \, \int_0^\infty \left\|  e^{-\lambda t} \mathcal{T}_t \right\| dt\, = \, \frac{1}{\Re \lambda},
\end{align}
where we used \eqref{revision}. Note that we can formally write
\begin{align}
\int_0^\infty e^{-\lambda t} \mathcal{T}_t dt \, = \, \frac{f(\lambda)}{\lambda} \int_0^\infty e^{-sf(\lambda)} T_s \, ds \, = \, & \frac{f(\lambda)}{\lambda} \int_0^\infty e^{-s(f(\lambda)-A)} ds \notag \\
 = \, & \frac{f(\lambda)}{\lambda} \frac{1}{f(\lambda) - A},
 \label{giustificare}
\end{align}
where we used Proposition \ref{lemmasullinverso} to state that $\mathcal{L} \left[ l_{\puntomio} (s) \right] (\lambda) \, = \, \frac{f(\lambda)}{\lambda} e^{-sf(\lambda)}$ and $l_t(s)$ represents by abuse of notation the density of $l_t(ds)$. In \eqref{giustificare} we used the exponential representation $T_t = e^{tA}$. Since we do not assume that $A$ is bounded the symbol $e^{tA}$ should be intended as $e^{tA}u = \textrm{strong-}\lim_{\lambda \to \infty} e^{tA_\lambda}u$ (Yosida approximation) where $A_\lambda := \lambda A R_\lambda$.

Now we have to prove that $\forall u \in \textrm{Dom}(A)$ we must have $^fR_\lambda u \in \textrm{Dom}(A)$ and
\begin{align}
\l f(\lambda) - A \r \, ^fR_\lambda u \, = \, \, ^fR_\lambda \l f(\lambda) - A \r u \, = \, \frac{f(\lambda)}{\lambda} u.
\end{align}
Now by the definition
\begin{align}
A_h \, = \, \frac{1}{h} \l T_hu - u \r
\end{align}
for which $\lim_{h \to 0} A_h = A$, we find
\begin{align}
A_h \, ^fR_\lambda u \, = \, & \frac{T_h - I}{h} \int_0^\infty e^{-\lambda t} \int_0^\infty T_su \, l_t(ds) \, dt \notag \\
 = \, & \int_0^\infty e^{-\lambda t} \int_0^\infty \frac{T_{s+h}u-T_su}{h} l_t(ds) \, dt \notag \\
= \, & \frac{f(\lambda)}{\lambda} \int_0^\infty e^{-sf(\lambda)} \frac{T_{s+h}u-T_su}{h} \, ds \notag \\
= \, & \frac{e^{hf(\lambda)}}{h} \frac{f(\lambda)}{\lambda} \int_h^\infty e^{-sf(\lambda)} T_zu \, dz \, - \frac{1}{h} \frac{f(\lambda)}{\lambda} \int_0^\infty e^{-sf(\lambda)} T_su \, ds \notag \\
= \, & \frac{f(\lambda)}{\lambda} \frac{e^{hf(\lambda)}-1}{h\lambda} \int_0^\infty e^{-zf(\lambda)} T_zu \, dz \, - \frac{f(\lambda)}{\lambda} \frac{1}{h} \int_0^h e^{-sf(\lambda)} T_su \, ds \notag \\
\stackrel{ h \to 0}{\longrightarrow} \, & f(\lambda) \, ^fR_\lambda u \, - \, \frac{f(\lambda)}{\lambda} u.
\end{align}
This proves that $^fR_\lambda u \in \textrm{Dom} \l A \r$ and that $\l f(\lambda) - A \r \, ^fR_\lambda u = \frac{f(\lambda)}{\lambda} u$. Furthemore we find
\begin{align}
^fR_\lambda A u  \, = \, &\int_0^\infty e^{-\lambda t} \mathcal{T}_tAu \, dt \, = \, \int_0^\infty e^{-\lambda t} \int_0^\infty T_sAu \, l_t(ds) \, dt \notag \\
= \, & \int_0^\infty e^{-\lambda t} \int_0^\infty \frac{d}{ds} T_su \, l_t(ds) \notag \\
= \, & \frac{f(\lambda)}{\lambda} \int_0^\infty e^{-sf(\lambda)} \frac{d}{ds} T_su \,  ds \notag \\
= \, & -\frac{f(\lambda)}{\lambda}u + f(\lambda) \, ^fR_\lambda u,
\end{align}
which completes the proof.
\end{enumerate}
\end{proof}

\subsection{Convolution-type space-derivatives and Phillips' formula}
\label{sezionefilippo}
Let $T_t$ be a $C_0$-semigroup acting on functions $u \in \mathfrak{B}$, where $\l \mathfrak{B}, \left\| \puntomio \right\|_\mathfrak{B} \r$ is Banach space. Let $\mu_t$ be a convolution semigroup of sub-probability measures on $[0, \infty)$ such that $\mathcal{L} [\mu_t] = e^{-tf}$ where $f$ is a Bernstein function. The operator defined by the Bochner integral
\begin{equation}
^fT_t u \, = \, \int_0^\infty T_su \, \mu_t(ds), \qquad u \in \mathfrak{B},
\end{equation}
is called a subordinate semigroup in the sense of Bochner.
A classical result due to \citet{filippo} state that the infinitesimal generator $\l \, ^fA, \textrm{Dom} \l \,^fA \r \r$ of the subordinate semigroup $^fT_t$ on $u \in \mathfrak{B}$ is written as
\begin{align}
 ^fA u \, = \, -f \l -A \r u\, = \,  \, -au +b A u + \int_0^\infty \l T_su -u\r \bar{\nu}(ds),
\label{piuomeno}
\end{align}
with $\textrm{Dom} \l A \r \subseteq \textrm{Dom} \l \, ^fA \r$.

In Definition \ref{definderivataspace} we developed the convolution-type space-derivatives $^f\mathpzc{D}_x^{\pm}$ defined on the whole real axis. We have shown that they becomes, for $f(x)= x^\alpha$, $\alpha \in (0,1)$, the Weyl space-fractional derivatives defined in \eqref{weyldestra} and \eqref{weylsinistra}. In this section we show that $- \, ^f\mathpzc{D}_x^-$ can be viewed as the infinitesimal generator of the subordinate semigroup in the sense of Bochner
\begin{align}
Q_tu(x) \, = \, \int_0^\infty T_s^lu(x) \mu_t(ds)
\end{align}
where $T_t^lu(x)=u(x+t)$, $u \in L^p \l \mathbb{R} \r$, is the left translation semigroup. 
\begin{os}
We recall that the left translation operator $T_t^lu = u(x+t)$, $t\geq 0$, $u \in L^p \l \mathbb{R} \r$, defines a strongly continuous $C_0$-semigroup on $L^p \l \mathbb{R} \r$ (see for example \cite{autorivari} page 66) and has infinitesimal generator $A = \frac{\partial}{\partial x}$ with $\textrm{Dom} \l A \r = W^{1, p}$, $1 \leq p < \infty$, where
\begin{equation}
W^{1, p} \l \mathbb{R} \r \, = \, \ll u \in L^p \l \mathbb{R} \r: u \textrm{ absolutely continuous and } u^\prime \in L^p \l \mathbb{R}  \r \rr.
\label{w1p}
\end{equation}
This implies that $- \, ^f\mathpzc{D}_x^-$ have to coincide with Phillips' representation \eqref{piuomeno} with $A = \frac{\partial}{\partial x}$.
\end{os}
\begin{prop}
Let $^f\sigma (t)$ be a subordinator with Laplace exponent $f$ and transition probabilities $\mu_t$. Let $\zeta = \inf \ll t\geq 0 : \, ^f\sigma (t) = + \infty \rr$.
The solution to the initial value problem
\begin{align}
\begin{cases}
\frac{\partial}{\partial t} q(x, t) \, = \, - \, ^f\mathpzc{D}_x^{-} \, q(x, t), \qquad x \in \mathbb{R}, 0<t<\infty, \\
q(x, 0) \,  = \,u(x) \in W^{1, p} \l \mathbb{R} \r, \\
\end{cases}
\label{(1)}
\end{align}
is given by the contractive strongly continuous semigroup of operators on $L^p \l \mathbb{R} \r$
\begin{equation}
Q_t u(x) \, = \,  \int_{0}^\infty u(x+y) \, \mu_t(dy), \qquad t < \infty
\label{semigrupposubordteorema}
\end{equation}
which is the subordinate translation semigroup $T_t^lu(x) = u(x+t)$, in the sense of Bochner.
The operator $^f\mathpzc{D}_x^-$ is that of Definition \ref{definderivataspace} and $W^{1,p}$ is defined in \eqref{w1p}.
\end{prop}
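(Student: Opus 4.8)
The plan is to recognise $Q_t$ as a subordinate semigroup in the sense of Bochner whose underlying $C_0$-semigroup is the left translation semigroup, and then to identify its infinitesimal generator with $-\,{}^f\mathpzc{D}_x^-$ via Phillips' formula \eqref{piuomeno}.

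First I would collect the building blocks. As recalled in the Remark preceding the statement, $T_t^l u(x) = u(x+t)$ is a strongly continuous contraction semigroup on $L^p(\mathbb{R})$ with generator $A = \partial/\partial x$ and $\mathrm{Dom}(A) = W^{1,p}(\mathbb{R})$. Since $\mu_t$ is a convolution semigroup of sub-probability measures with $\mathcal{L}[\mu_t] = e^{-tf}$, classical Bochner subordination theory yields that $Q_t u = \int_0^\infty T_s^l u \, \mu_t(ds)$ is a well-defined $C_0$-semigroup on $L^p(\mathbb{R})$; contractivity is immediate from $\left\| Q_t u \right\|_p \leq \int_0^\infty \left\| T_s^l u \right\|_p \mu_t(ds) = \left\| u \right\|_p \mu_t[0,\infty) \leq \left\| u \right\|_p$ (using that $T_s^l$ is an $L^p$-isometry and $\mu_t$ a sub-probability measure), while strong continuity follows by dominated convergence together with $\mu_t \to \delta_0$ vaguely as $t \to 0$.

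The core of the argument is the identification of the generator. By Phillips' theorem \eqref{piuomeno}, the generator $\l {}^fA, \mathrm{Dom}({}^fA) \r$ of $Q_t$ satisfies $W^{1,p}(\mathbb{R}) = \mathrm{Dom}(A) \subseteq \mathrm{Dom}({}^fA)$ and ${}^fA u(x) = -a u(x) + b u'(x) + \int_0^\infty \l u(x+s) - u(x) \r \bar\nu(ds)$ for $u \in W^{1,p}(\mathbb{R})$. I would then prove ${}^fA u = -\,{}^f\mathpzc{D}_x^- u$ on $W^{1,p}(\mathbb{R})$. Writing $u(x+s) - u(x) = \int_0^s u'(x+r)\,dr$ and exchanging the order of integration (Tonelli, justified near the origin by $\int_0^\infty (s \wedge 1)\,\bar\nu(ds) < \infty$ and $u' \in L^p$) gives $\int_0^\infty \l u(x+s) - u(x) \r \bar\nu(ds) = \int_0^\infty u'(x+r)\,\bar\nu(r,\infty)\,dr$. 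Incorporating the killing contribution through $a \int_0^\infty u'(x+r)\,dr = -a u(x)$, which holds since $W^{1,p}(\mathbb{R}) \hookrightarrow C_0(\mathbb{R})$ so that $u(x+r) \to 0$ as $r \to \infty$, one obtains ${}^fA u(x) = b u'(x) + \int_0^\infty u'(x+r)\,\l a + \bar\nu(r,\infty) \r dr = b \frac{d}{dx} u(x) + \int_0^\infty \frac{\partial}{\partial x} u(x+r)\,\nu(r)\,dr = -\,{}^f\mathpzc{D}_x^- u(x)$, which is precisely the operator of Definition \ref{definderivataspace}. The same identity can be verified on the Fourier side, where $\mathcal{F}[A u](\xi) = -i\xi\,\widehat u(\xi)$, whence $\mathcal{F}[{}^fA u](\xi) = -f(i\xi)\,\widehat u(\xi) = \mathcal{F}[-\,{}^f\mathpzc{D}_x^- u](\xi)$ by Lemma \ref{fourierderivataspace}.

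Finally, since $Q_t$ is a $C_0$-semigroup with generator $-\,{}^f\mathpzc{D}_x^-$ and $u \in W^{1,p}(\mathbb{R}) \subseteq \mathrm{Dom}({}^fA)$, the orbit $q(x,t) = Q_t u(x)$ is strongly differentiable in $t$ with $\frac{\partial}{\partial t} q = -\,{}^f\mathpzc{D}_x^- q$ and $q(x,0) = u(x)$, so $Q_t u$ solves \eqref{(1)}. I expect the delicate point to be the rigorous identification step: the tail $\bar\nu(r,\infty)$ need not be integrable at infinity, so the Fubini exchange should be localised (or the equality argued as an identity in $L^p$ rather than pointwise), and one should also be careful about the sense in which the differential operator $-\,{}^f\mathpzc{D}_x^-$ agrees with the abstract generator ${}^fA$ on the whole of $\mathrm{Dom}({}^fA)$, not merely on $W^{1,p}$.
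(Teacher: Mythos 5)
Your proposal is correct and follows essentially the same route as the paper: recognize $Q_t$ as a Bochner-subordinate semigroup, invoke Phillips' formula for its generator, and identify that generator with $-\,{}^f\mathpzc{D}_x^-$ on $W^{1,p}(\mathbb{R})$ by writing the increment $u(x+s)-u(x)$ as an integral of $u'$ and interchanging the order of integration (the paper runs the same computation in the reverse direction, from the convolution derivative to the Phillips form). Your added checks of contractivity, strong continuity, the vanishing of $u$ at infinity for the killing term, and the Fourier-side verification are consistent with, and slightly more explicit than, the paper's argument.
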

\begin{proof}
Since $Q_tu$ is a subordinate semigroup in the sense of Bochner, it defines again a $C_0$-semigroup on $L^p \l \mathbb{R} \r$.
By applying Phillips' result (\cite{filippo}) we know that the infinitesimal generator of $Q_tu$ is written as
\begin{align}
-f \l -\frac{\partial}{\partial x} \r u(x) \, = \, -au(x) + b \frac{\partial}{\partial x} u(x) + \int_0^\infty \l T_s^lu(x) - u(x) \r \bar{\nu}(ds).
\label{evabbe}
\end{align}
Since
\begin{align}
\left\| -f\l -\frac{\partial}{\partial x} \r u(x) \right\|_p \leq a \left\| u(x) \right\|_p + b \left\| \frac{\partial}{\partial x} u(x) \right\|_p + \int_0^\infty \left\| T_s^lu(x)-u(x) \right\|_p \bar{\nu}(ds)
\end{align}
by applying the well-known inequality (see for example \citet{jacob1})
\begin{equation}
\left\| T_tu(x) - u(x) \right\| \leq \l t \left\| Au(x) \right\| \wedge 2 \left\| u(x) \right\| \r, \qquad u \in \textrm{Dom}\l A \r
\label{disuguaglianzafiga}
\end{equation}
which is valid in general for a strongly continuous semigroup $T_tu(x)$ on a Banach space $\l \mathfrak{B}, \left\| \puntomio \right\| \r$ and infinitesimal generator $\l A, \textrm{Dom}\l A \r \r$, we can write
\begin{align}
\left\| -f \l -\frac{\partial}{\partial x} \r \right\|_p \leq & \, a\left\| u(x) \right\|_p +b \left\| \frac{\partial}{\partial x} u(x) \right\|_p + \int_0^\epsilon z \bar{\nu}(dz) \left\| \frac{\partial}{\partial x} u(x) \right\|_p  \notag \\
& + 2\int_\epsilon^\infty \bar{\nu}(dz) \left\| u(x) \right\|_p.
\label{nonimporta}
\end{align}
This shows that \eqref{evabbe} is defined on
\begin{align}
\begin{cases} W^{1,p} \l \mathbb{R} \r, \quad & \textrm{ if } b>0, \\ W^{1, p} \l \mathbb{R} \r, & \textrm{ if } b = 0 \textrm{ and } \bar{\nu}(0, \infty) = \infty, \\ L^p \l \mathbb{R} \r, & \textrm{ if }  b=0 \textrm{ and } \bar{\nu}(0, \infty) < \infty.  \end{cases}
\end{align}
since for $\bar{\nu}(0, \infty) < \infty$ we can choose $\epsilon = 0$ in \eqref{nonimporta}.

The operator $^f\mathpzc{D}_x^-$ 
\begin{align}
- \, ^f\mathpzc{D}_x^- u(x)  \, = \, &  b \frac{\partial}{\partial x} u(x) + \int_0^\infty \frac{\partial}{\partial x} u(x+s) \nu(s)ds
\end{align}
for $u \in W^{1, p} \l \mathbb{R} \r$ can be rewritten as
\begin{align}
- \, ^f\mathpzc{D}_x^- u(x) \, =  \, &   b \frac{\partial}{\partial x} u(x) + \int_0^\infty \frac{\partial}{\partial s} u(x+s) \l a+  \bar{\nu}(s, \infty) \r \, ds  \notag \\
= \, &  -au(x) + b \frac{\partial}{\partial x} u(x) + \int_0^\infty  \int_0^z \frac{\partial}{\partial s} T_s^lu(x) \, ds \, \bar{\nu}(dz) \notag \\
= \, & -au(x) + b \frac{\partial}{\partial x} u(x) + \int_0^\infty  \l T_z^lu(x) - u(x) \r \, \bar{\nu}(dz)
\end{align}
which coincides with \eqref{evabbe}. This completes the proof.
\end{proof}

\section{Example: the tempered stable subordinator}
By setting the Bernstein function considered in previous sections to be $f(x) = x^\alpha$, $\alpha \in (0,1)$, we retrive the stable subordinator $\sigma^\alpha (t)$, $t\geq 0$, for which $\mathbb{E}e^{-\lambda \sigma^\alpha(t)} = e^{-t\lambda^\alpha}$, and its inverse process $L^\alpha(t)$, $t\geq 0$. Therefore by performing the substitution $f(x) = x^\alpha$ all throughout the paper we retrive the results  related to fractional calculus. In this section we take as example the Bernstein function
\begin{equation}
f(x) \, = \, \l x+\vartheta  \r^\alpha -\vartheta ^\alpha \, = \,  \frac{\alpha}{\Gamma (1-\alpha)} \int_0^\infty  \l 1-e^{-x s} \r e^{-\vartheta  s} s^{-1-\alpha} \, ds,
\label{bernrelsub}
\end{equation}
where $\vartheta >0$, $\alpha \in (0,1)$.
The Bernstein function \eqref{bernrelsub} is the Laplace exponent of the subordinator $^{\vartheta }\sigma^\alpha (t)$ such that
\begin{align}
\mathbb{E}e^{-\lambda \, ^{\vartheta }\sigma^\alpha (t)} \, = \, e^{-t \l \l \lambda +\vartheta  \r^\alpha -\vartheta ^\alpha \r}.
\end{align}
The process $^{\vartheta }\sigma^\alpha (t)$, $t\geq 0$, is known in literature as the relativistic stable subordinator since it appears in the study of the stability of the relativistic matter (\citet{lieb}) but it is also known as the \emph{tempered stable subordinator} (see for example \citet{meer12} page 207, \citet{rosi} or \citet{zolo}, Lemma 2.2.1).
From \eqref{bernrelsub} we know that the L\'evy measure has the explicit representation
\begin{equation}
\bar{\nu} (ds)  \, = \, \frac{\alpha e^{-\vartheta s}s^{-\alpha -1}}{\Gamma (1-\alpha)}ds,
\end{equation}
and has infinite mass ($f(x)$ is not bounded). Furthermore its tail becomes
\begin{equation}
\nu(s) \, = \, \l \frac{\alpha \vartheta ^\alpha \Gamma (-\alpha, \, s)}{\Gamma (1-\alpha)}  \r,
\end{equation}
where
\begin{equation}
\Gamma (-\alpha, s) \, = \, \int_s^\infty e^{-z} z^{-\alpha -1} \, dz
\end{equation}
is the incomplete Gamma function.
It is well-known that the governing equation of $^{\vartheta }\sigma^{\alpha} (t)$, $t\geq 0$, is written by using the so-called \emph{tempered fractional derivative} 
\begin{align}
\partial^{\vartheta , \alpha}_x u(x) \, = \, e^{-\vartheta  x} \frac{^R\partial^\alpha}{\partial x^\alpha} \left[ e^{\vartheta  x} \, u(x) \right] - \vartheta ^\alpha u(x), \qquad \alpha \in (0,1),
\end{align}
as
\begin{equation}
\frac{\partial}{\partial t} \, \mu_t^{\vartheta, \alpha }(x) \, = \,- \partial_x^{\vartheta , \alpha} \, \mu_t^{\vartheta, \alpha }(x), \qquad x>0, t>0,
\end{equation}
see \cite{meer12} page 209 and the references therein. According to Theorem \ref{kernelsubord} we must have
\begin{equation}
\frac{\partial}{\partial t} \, \mu_t^{\vartheta, \alpha }(x) \, = \, - \,^f\mathcal{D}_x^{(0, \infty)} \, \mu_t^{\vartheta, \alpha }(x), \qquad x>0, t>0,
\end{equation}
and indeed it is easy to show that if $f(\lambda) = \l \lambda + \vartheta  \r^\alpha -\vartheta ^\alpha$
\begin{align}
 ^f\mathcal{D}_x^{(0, \infty)} u(x) \, = \, \frac{d}{d x} \int_0^x u(x-s) \l \frac{\alpha \vartheta ^\alpha \Gamma (-\alpha , s)}{\Gamma (1-\alpha)} \r \, ds \, = \, \partial^{\vartheta , \alpha}_x u(x).
\end{align}
This can be done for example by observing that
\begin{align}
\mathcal{L} \left[ \frac{d}{d x} \int_0^x u(x-s) \l \frac{\alpha \vartheta^\alpha \Gamma (-\alpha , s)}{\Gamma (1-\alpha)} \r \, ds \right] (\lambda) \, = \,  \mathcal{L} \left[ \partial^{\vartheta , \alpha}_x u(x) \right] (\lambda).
\end{align}

The time operator $^f\mathfrak{D}_t$ governing the density of 
\begin{align}
^{\vartheta }L^\alpha(t) \, = \, \inf \ll s>0 : \, ^{\vartheta }\sigma^\alpha (s) > t \rr,
\end{align}
becomes in this case
\begin{align}
^f\mathcal{D}_t^{(0, \infty)} l_t^{\vartheta, \alpha }(x) \,   = \, \frac{\partial}{\partial t}  \int_0^t  \, l_{t-s}^{\vartheta, \alpha }(x) \, \l \frac{\alpha \vartheta ^\alpha \Gamma (-\alpha, \, s)}{\Gamma (1-\alpha)}  \r \, ds,
\end{align}
and therefore $l_t^{\vartheta, \alpha }(x)$, $t>0$, is the solution to
\begin{align}
\begin{cases}
\frac{\partial}{\partial t} \int_0^t  \; l_{t-s}^{\vartheta, \alpha }(x) \, \l \frac{\alpha \vartheta ^\alpha \Gamma (-\alpha, \, s)}{\Gamma (1-\alpha)}   \r \, ds \, = \, - \frac{\partial}{\partial x} \; l_{t}^{\vartheta, \alpha }(x), \qquad & t>0, x>0, \\
l_{t}^{\vartheta, \alpha }(0) \, = \,  \frac{\alpha \vartheta ^\alpha \Gamma (-\alpha, \, t)}{\Gamma (1-\alpha)} , & t>0, \\
l_{0}^{\vartheta, \alpha }(x) \, = \, \delta(x).
\end{cases}
\end{align}

Finally, in view of Proposition \ref{proposizrandomwalk}, we are able to write  the CTRW converging in distribution to $^\vartheta \sigma^\alpha(t)$, $t\geq 0$. We have
\begin{align}
\lim_{\gamma \to 0} \sum_{j=0}^{N \l t \l  \frac{\alpha \vartheta ^\alpha \Gamma (-\alpha, \gamma)}{\Gamma (1-\alpha)} \r \r} Y_j \stackrel{\textrm{law}}{\longrightarrow} \, ^{\vartheta }\sigma^{\alpha} (t)
\end{align}
where $Y_j$ are i.i.d. random variables with distribution
\begin{align}
\Pr \ll Y_j \in dy \rr /dy \, = \, \frac{e^{-\vartheta y}y^{-\alpha-1}}{\vartheta ^\alpha \Gamma \l -\alpha, \gamma \r} \, \mathds{1}_{[y>\gamma]}, \qquad \gamma > 0, \, \forall j = 1, \dots, n,
\end{align}
and $N(t)$, $t\geq 0$, is an independent homogeneous Poisson process with parameter $\theta =1$.

\section{Ackwnoledgements}
Thanks are due to the referee whose remarks and suggestions have certainly improved a previous draft of the paper.


\begin{thebibliography}{16}
\providecommand{\natexlab}[1]{#1}
\providecommand{\url}[1]{\texttt{#1}}
\expandafter\ifx\csname urlstyle\endcsname\relax
  \providecommand{\doi}[1]{doi: #1}\else
  \providecommand{\doi}{doi: \begingroup \urlstyle{rm}\Url}\fi
  
  
\bibitem[Achar et al.(2007)]{achar}
B.N.N. Achar, C.F. Lorenzo and T.T. Hartley.
\newblock The Caputo fractional derivative: initialization issues relative to fractional differential equations.
\newblock \emph{Advances in fractional calculus}, 27--42, Springer, Dordrecht, 2007.
  
 
\bibitem[Baeumer and Meerschaert(2001)]{baem}
B. Baeumer and M.M. Meerschaert.
\newblock Stochastic solutions for fractional Cauchy problems.
\newblock \emph{Fractional Calculus and Applies Analysis}, 4(4): 481 -- 500, 2001.


\bibitem[Benson et al.(2001)]{benson}
D. Benson, R. Schumer, M. Meerschaert and S. Wheatcraft.
\newblock{Fractional dispersion, L\'evy motions, and the MADE tracer tests.}
\newblock{\emph{Transport in porous media}}, 42: 211 -- 240, 2001.


\bibitem [Bernstein(1929)]{artbern}
S. Bernstein.
\newblock {Sur les fonctions absolument monotones (french)}.
\newblock \emph{Acta Math}, 52; 1 -- 66, 1929.

\bibitem [Bertoin(1996)]{bertoinb}
J. Bertoin.
\newblock {L\'evy processes}.
\newblock \emph{Cambridge University Press}, Cambridge, 1996.

\bibitem [Bertoin(1997)]{bertoins}
J. Bertoin.
\newblock {Subordinators: examples and appications}.
\newblock \emph{Lectures on probability theory and statistics (Saint-Flour, 1997)}, 1 -- 91. \emph{Lectures Notes in Math.}, 1717, Springer, Berlin, 1999.



\bibitem [Bochner(1949)]{bochner}
S. Bochner.
\newblock {Diffusion equation and stochastic processes}.
\newblock \emph{Proc. Nat. Acad. Sci. USA}, 35: 368 -- 370, 1949.

\bibitem [Bochner(1955)]{bochner2}
S. Bochner.
\newblock {Harmonic analysis and the theory of probability}.
\newblock \emph{University of California Press}, Berkeley, 1955.




\bibitem[D'Ovidio(2012)]{mirkospa}
M. D'Ovidio.
\newblock{From Sturm-Liouville problems to fractional and anomalous diffusions.}
\newblock{\emph{Stochastic Processes and their Applications,}} 122: 3513 -- 3544, 2012.


\bibitem [Engel and Nagel(2000)]{autorivari}
K.J. Engel and R. Nagel.
\newblock{One-Parameter Semigroups for Linear Evolution Equations}.
\newblock \emph{Springer, Graduate Texts in Mathematics}, 2000.



\bibitem [Feller(1970)]{fellerb}
W. Feller.
\newblock {An introduction to probability theory and its applications, vol. II}.
\newblock \emph{Wiley}, New York, 1966.


\bibitem[Gorenflo et al.(1999)]{mainardiwright}
R. Gorenflo, Y. Luchko and F. Mainardi.
\newblock{Analytical properties and applications of the Wright function.}
\newblock{\emph{Fractional Calculus and applied analysis,}} 2(4): 383 -- 414, 1999.



\bibitem [It\^{o}(1942)]{itodec}
K. It\^{o}.
\newblock {On stochastic processes. I. (Infinitely divisible laws of probability).}
\newblock \emph{Japan J. Math.}, 18: 261 -- 301, 1942.



\bibitem[Jacob(2001)]{jacob1}
N. Jacob.
\newblock{Pseudo Differential Operators \& Markov Processes: Fourier analysis and semigroups. Volume 1.}
\newblock{\emph{Imperial College Press, Fourier analysis and semigroups}}, 2001.


\bibitem[Kilbas et al.(2006)]{kill}
A.A. Kilbas, H.M. Srivastava and J.J. Trujillo.
\newblock{Theory and Applications of Fractional Differential Equations.}
\newblock{\emph{North-Holland Mathematics Studies, 204. Elsevier Science B.V.,}} 2006.

\bibitem[Kochubei(2011)]{kochu}
A.N. Kochubei.
\newblock{General fractional calculus, evolution equations and renewal processes}.
\newblock{\emph{Integral Equations and Operator Theory}}, 71: 583 -- 600, 2011.


\bibitem[Leonenko et al.(2013))]{leone}
N. Leonenko, M.M. Meerschaert and A. Sikorskii.
\newblock{Correlation structure of fractional Pearson diffusions.}
\newblock{\emph{Computers \& Mathematics with Applications}}, In press, 2013.


\bibitem[Lieb(1990)]{lieb}
E.H. Lieb.
\newblock{The stability of matter: from atoms to stars.}
\newblock{\emph{Bull. Amer. Math.
Soc.}}, 22: 1 -- 49, 1990.


\bibitem[Lorenzo and Hartley(2002)]{lorenzo}
C.F. Lorenzo and T.T. Hartley.
\newblock{Variable order and distributed order fractional operators.}
\newblock{\emph{Nonlinear Dynam.}}, 29: 57 -- 98, 2002.



\bibitem [Mainardi(2010)] {mainalibro}
F. Mainardi.
\newblock {Fractional calculus and waves in linear viscoelasticity. An introduction to mathematical models.}
\newblock \emph{Imperial College Press}, xx+347 pp., London, 2010.




\bibitem [Meerschaert et al.(2009)] {meer09}
M.M. Meerschaert, E. Nane and P. Vellaisamy.
\newblock {Fractional Cauchy problems on bounded domains}.
\newblock \emph{The Annals of Probability}, 37(3):979 -- 1007, 2009.





\bibitem [Meerschaert and Scheffer(2004)] {meer04}
M.M. Meerschaert and H.P. Scheffer.
\newblock {Limit theorems for continuous time random
walks with infinite mean waiting times.}
\newblock \emph{J. Appl. Probab.}, 41: 623 -- 638, 2004.

\bibitem [Meerschaert and Scheffer(2008)]{meertri}
M.M. Meerschaert and H.P. Scheffer.
\newblock {Triangular array limits for continuous time random walks}.
\newblock \emph{Stochastic Processes and their Applications}, 118(9): 1606 -- 1633, 2008.

\bibitem [Meerschaert and Sikorskii(2012)] {meer12}
M.M. Meerschaert and A. Sikorskii.
\newblock {Stochastic Models for Fractional Calculus}.
\newblock \emph{Walter de Gruyter}, Vol. 43 of De Gruyter Studies in Mathematics, Berlin/Boston, 2012.


\bibitem [Meerschaert and Straka(2013)]{stracca}
M.M. Meerschaert and P. Straka.
\newblock {Inverse stable subordinators.}
\newblock \emph{Mathematical Modeling of Natural Phenomena}, 8(2): 1 -- 16, 2013.


\bibitem [Montroll and Weiss(1965)] {montrol}
E. Montroll and G.H. Weiss.
\newblock {Random walk on lattices. II.}
\newblock \emph{J. Math. Phys.}, 6: 167 -- 181, 1965.

\bibitem [Orsingher and Beghin(2003)] {orsptrf}
E. Orsingher and L. Beghin.
\newblock {Time-fractional telegraph equations and telegraph process with Brownian time}.
\newblock \emph{Probability Theory and Related Fields}, 128: 141 -- 160, 2003.



\bibitem [Orsingher and Beghin(2009)] {orsann}
E. Orsingher and L. Beghin.
\newblock {Fractional diffusion equations and processes with randomly varying time}.
\newblock \emph{The Annals of Probability}, 37(1):206 -- 249, 2009.








\bibitem [Phillips(1952)] {filippo}
R.S. Phillips.
\newblock {On the generation of semigroups of linear operators. Pacific J. Math.}.
\newblock \emph{Pacific J. Math.}, 2:343 -- 369, 1952.

\bibitem [Rosi\'nski(2007)] {rosi}
J. Rosi\'nski.
\newblock {Tempering stable processes}.
\newblock \emph{Stochastics Processes and their Applications}, 117: 677 -- 707, 2007.




\bibitem [Saichev and Zaslavsky(1997)]{saiche}
A.I. Saichev and G.M. Zaslavsky.
\newblock {Fractional kinetic equations: solutions and applications}.
\newblock \emph{Chaos}, 7(4): 753 -- 764, 1997.


\bibitem [Samorodnitsky(2006)]{samoro}
G. Samorodnitsky.
\newblock {Long range dependence}.
\newblock \emph{Foundation and Trends in Stochastic Systems}, 1(2): 163 -- 257, 2006.

\bibitem [Samorodnitsky and Taqqu(1944)]{samotaq}
G. Samorodnitsky and M. Taqqu.
\newblock {Stable non-Gaussian Random Processes}.
\newblock \emph{Chapman and Hall}, New York, 1944.




\bibitem [Sato(1999)] {satolevy}
K. Sato.
\newblock {L\'evy processes and infinitely divisible distributions}.
\newblock \emph{Cambridge University Press}, 1999




\bibitem [Schilling et al.(2010)]{librobern}
R.L. Schilling, R. Song and Z. Vondra\v{c}ek.
\newblock {Bernstein functions: theory and applications}.
\newblock \emph{Walter de Gruyter GmbH \& Company KG}, Vol 37 of De Gruyter Studies in Mathematics Series, 2010.

\bibitem[Song and Vondra\v{c}ek(2009)]{vondra}
R. Song and Z. Vondra\v{c}ek.
\newblock {Potential theory of subordinate Brownian motion}.
\newblock In: \emph{Potential Analysis of Stable Processes and its Extensions, P. Graczyk, A. Stos, editors, Lecture Notes in Mathematics 1980}: 87--176, 2009.



\bibitem[Veillette and Taqqu(2010)]{momentinv}
M. Veillette and M.S. Taqqu.
\newblock Using differential equations to obtain joint moments of first-passage times of increasing L\'evy processes.
\newblock \emph{Stat. Probab. Letters}, 80: 697 -- 705, 2010.


\bibitem[Zaslavsky(1994)]{zasla}
G. Zaslavsky.
\newblock Fractional kinetic equation for Hamiltonian chaos. Chaotic advection, tracer dynamics and turbulent dispersion.
\newblock \emph{Phys. D}, 76: 110 -- 122, 1994.


\bibitem[Zolotarev(1986)]{zolo}
V. Zolotarev.
\newblock One-dimensional stable distributions.
\newblock \emph{Translations of Mathematical Monographs 65, American Mathematical Society}, Providence, RI, 1986.




\end{thebibliography}
\end{document}